\algrenewcommand\algorithmicrequire{\textbf{Input:}}
\algrenewcommand\algorithmicensure{\textbf{Output:}}
\newcommand{\norm}[1]{\left\lVert#1\right\rVert}
\newcommand{\bfA}{{\bf A}}
\newcommand{\bfB}{{\bf B}}
\newcommand{\bfC}{{\bf C}}
\newcommand{\bfD}{{\bf D}}
\newcommand{\bfF}{{\bf F}}
\newcommand{\bfG}{{\bf G}}
\newcommand{\bfH}{{\bf H}}
\newcommand{\bfI}{{\bf I}}
\newcommand{\bfK}{{\bf K}}
\newcommand{\bfL}{{\bf L}}
\newcommand{\bfM}{{\bf M}}
\newcommand{\tbfM}{\tilde{\bf M}}
\newcommand{\bfP}{{\bf P}}
\newcommand{\bfQ}{{\bf Q}}
\newcommand{\bfR}{{\bf R}}
\newcommand{\bfT}{{\bf T}}
\newcommand{\bfU}{{\bf U}}
\newcommand{\tbfU}{\tilde{\bfU}}
\newcommand{\bfV}{{\bf V}}
\newcommand{\bfW}{{\bf W}}
\newcommand{\bfX}{{\bf X}}
\newcommand{\bfY}{{\bf Y}}
\newcommand{\bfZ}{{\bf Z}}
\newcommand{\bfb}{{\bf b}}
\newcommand{\bfd}{{\bf d}}
\newcommand{\bfe}{{\bf e}}
\newcommand{\bfg}{{\bf g}}
\newcommand{\bfh}{{\bf h}}
\newcommand{\bfm}{{\bf m}}
\newcommand{\bfu}{{\bf u}}
\newcommand{\bfw}{{\bf w}}
\newcommand{\bfx}{{\bf x}}
\newcommand{\bfz}{{\bf z}}
\newcommand{\bfzero}{{\bf0}}
\newcommand{\bfone}{{\bf1}}
\newcommand{\bfUpsilon}{{\boldsymbol{\Upsilon}}}
\newcommand{\tbfUpsilon}{\tilde{\boldsymbol{\Upsilon}}}
\newcommand{\bfPhi}{{\boldsymbol{\Phi}}}
\newcommand{\tbfPhi}{\tilde{\boldsymbol{\Phi}}}
\newcommand{\tbfPsi}{\tilde{\boldsymbol{\Psi}}}
\DeclareMathOperator*{\argmin}{arg\,min}
\newcommand*\samethanks[1][\value{footnote}]{\footnotemark[#1]}
\title{Efficient Decomposition-Based Algorithms for $\ell_1$-Regularized Inverse Problems with Column-Orthogonal and Kronecker Product Matrices}
\author{Brian Sweeney\thanks{School of Mathematical and Statistical Sciences, Arizona State University, Tempe, AZ (\href{mailto:bfsweene@asu.edu}{bfsweene@asu.edu},  \href{mailto:malena.espanol@asu.edu}{malena.espanol@asu.edu},  and \href{mailto:renaut@asu.edu}{renaut@asu.edu} )} \and Malena I. Espa\~nol\samethanks \and Rosemary Renaut\samethanks} 
\begin{document} 

\maketitle

\begin{abstract}
We consider an $\ell_1$-regularized inverse problem where both the forward and regularization operators have a Kronecker product structure. By leveraging this structure, a joint decomposition can be obtained using generalized singular value decompositions. We show how this joint decomposition can be effectively integrated into the Split Bregman and Majorization-Minimization methods to solve the 
$\ell_1$-regularized inverse problem. Furthermore, for cases involving column-orthogonal regularization matrices, we prove that the joint decomposition can be derived directly from the singular value decomposition of the system matrix. As a result, we show that framelet and wavelet operators are efficient for these decomposition-based algorithms in the context of $\ell_1$-regularized image deblurring problems. 
\end{abstract}

\begin{keywords} 
$\ell_1$ regularization, Kronecker product, Framelets, Wavelets, Generalized singular value decomposition
\end{keywords}

\begin{AMS} 65F22, 65F10, 68W40
\end{AMS}

\section{Introduction}
We are interested in solving discrete ill-posed inverse problems of the form $\bfA \bfx \approx \bfb$, where $\bfA \in \mathbb{R}^{m\times n}$, $m \geq n$, $\bfb \in \mathbb{R}^m$, and $\bfx \in \mathbb{R}^n$.  
 It is assumed that the matrix $\bfA$ has full column rank but is ill-conditioned, and $\bfb$ is contaminated by additive Gaussian noise: $\bfb = \bfA\bfx + \bfe$, where $\bfe$ is a Gaussian noise vector. Throughout this paper, we normalize the problem by dividing $\bfA$ and $\bfb$ by the standard deviation of $\bfe$. This corresponds to whitening the noise in the data.
 Due to the ill-posedness of this problem, the solution of the normal equations, $\bfx = (\bfA^\top\bfA)^{-1}\bfA^\top\bfb$, is poor, and therefore, we will apply regularization.  With generalized Tikhonov regularization~\cite{tikhonov1963solution}, $\bfx$ is selected to solve the minimization problem
\begin{align}\label{eq:GenTik}
    \min_{\bfx}\left\{ \frac{1}{2}\norm{\bfA \bfx - \bfb}_2^2 + \frac{\lambda^2}{2} \norm{\bfL \bfx}_2^2\right\},
\end{align}
where $\lambda$ is a regularization parameter and $\bfL \in \mathbb{R}^{p\times n}$ is a regularization matrix.  $\bfL$ is often selected as the discretization of some derivative operator \cite{hansen2010discrete}. We also assume that 
\begin{align}\label{eq:nullprop}
    \mbox{Null}(\bfA) \cap \mbox{Null}(\bfL) = \emptyset,
\end{align} 
where $\mbox{Null}(\bfC)$ denotes the null space of the matrix $\bfC$.

Instead of considering the Tikhonov problem \cref{eq:GenTik}, we will consider a sparsity-preserving problem
\begin{align}\label{eq:Lxh}
    \min_\bfx\left\{\frac{1}{2}\norm{\bfA\bfx-\bfb}_2^2 + \mu\norm{\bfL\bfx}_1\right\},
\end{align}
where $\mu$ is a regularization parameter. We will solve \cref{eq:Lxh} using two iterative methods, split Bregman (SB)~\cite{GO} and Majorization-Minimization (MM)~\cite{ huang2017majorization, hunter2004tutorial}. Both methods share a minimization sub-problem of the same form~\cite{sweeney2024parameter}.

Our focus is on  Kronecker Product (KP) matrices $\bfA$ and $\bfL$, that is, matrices that can be written as $\bfA = \bfA_1 \otimes \bfA_2$ and $\bfL = \bfL_1 \otimes \bfL_2,$ where $\otimes$ denotes the Kronecker product, which is defined for matrices $\bfC \in \mathbb{R}^{m \times n}$ and $\bfD \in \mathbb{R}^{p \times q}$ by
\begin{equation}\label{eq:defineKP}
    \bfC \otimes \bfD = \begin{bmatrix}
        c_{11}\bfD & c_{12} \bfD & \cdots & c_{1n} \bfD \\
        c_{21}\bfD & c_{22} \bfD & \cdots & c_{2n} \bfD \\
        \vdots & \vdots && \vdots \\
        c_{m1}\bfD & c_{m2} \bfD & \cdots & c_{mn} \bfD
    \end{bmatrix}_{mp \times nq}.
\end{equation}
Thus, we are interested in solving problem \cref{eq:Lxh} of the specific form
\begin{align}\label{eq:Lxhsep}
    \min_\bfx\left\{\frac{1}{2}\norm{(\bfA_1\otimes\bfA_2)\bfx-\bfb}_2^2 + \mu\norm{(\bfL_1\otimes\bfL_2)\bfx}_1\right\}.
\end{align}

KP matrices $\bfA$ and $\bfL$ arise in many applications. In image deblurring problems, $\bfA$ is a KP matrix, provided that the blur can be separated in the horizontal and vertical directions~\cite{hansen2006deblurring}.  Another case where $\bfA$ is a KP matrix is in 2D nuclear magnetic resonance (NMR) problems~\cite{bortolotti2016uniform,gazzola2019ir}, where the goal is to reconstruct a map of relaxation times from a probed material.
$\bfL$ is a KP matrix with certain forms of regularization, such as when orthonormal framelets \cite{ron1997affine} or wavelets \cite{daubechies1992ten} are used in two dimensions. Both are tight frames, namely, they form redundant orthogonal bases for $\mathbb{R}^n$. This redundancy allows for robust representations where the loss of information is more tolerable \cite{cai2010split}.
When used as regularization in \cref{eq:Lxh}, framelet \cite{buccini2020modulus,cai2009linearized,cai2010split,wang2014ultrasound} and wavelet-based \cite{fang2014wavelet} methods enforce sparsity on the solution within the tight frame.

In \cite{buccini2020modulus}, tight frames are used for regularization in a general $\ell_p$-$\ell_q$ problem without the assumptions of having KP matrices. 
Other minimization problems with KP matrices $\bfA$ or $\bfL$ have been studied before. Least-squares problems of the form
\begin{align*}
    \min_\bfx \norm{(\bfA_1 \otimes \bfA_2)\bfx - \bfb}^2_2,
\end{align*}
have been solved efficiently using the singular value and QR decompositions \cite{fausett1994large,fausett1997improved}. The special case where the forward operator has the form $\bfA = \bfI \otimes \bfA_2$, which is equivalent to a matrix inverse problem, has also been analyzed \cite{greensite2005inverse}. 
For least-squares problems of the form 
\begin{align}\label{eq:quickGSVD}
    \min_\bfx \norm{\begin{bmatrix}\bfA_1 \otimes \bfA_2 \\ \bfB_1 \otimes \bfB_2 \end{bmatrix}\bfx - \bfb}^2_2,
\end{align}
the generalized singular value decompositions (GSVDs) of $\{\bfA_1,\bfB_1\}$ and $\{\bfA_2,\bfB_2\}$ can be used to obtain a joint decomposition and solve the problem \cite{bardsley2011structured,knepper2011large,van2000ubiquitous}.  In particular, Givens rotations can be used to diagonalize the matrix containing the generalized singular values, which then diagonalizes the problem. A regularized problem of the form
\begin{align}\label{eq:quickGSVD2}
    \min_\bfx \norm{\begin{bmatrix}\bfA_x \otimes \bfL_y \\ \bfL_x \otimes \bfA_y \end{bmatrix}\bfx - \bfb}^2_2+\lambda^2\norm{\begin{bmatrix}\bfI \otimes \bfL_y \\ \bfL_x \otimes \bfI \end{bmatrix}\bfx}_2^2
\end{align}
occurs in adaptive optics \cite{bardsley2011structured,knepper2011large}. The structure of this problem can be utilized to rewrite it as a least-squares problem with a preconditioner.
Another problem that utilizes KP matrices is the constrained least-squares problem
\begin{align*}
    \min_\bfx \norm{(\bfA_1 \otimes \bfA_2)\bfx - \bfb}^2_2, \quad \text{s.t. } (\bfL_1 \otimes \bfL_2)\bfx = \bfh,
\end{align*}
which is considered in \cite{barrlund1998efficient}.  In this case, both the forward operator and the constraint are KP matrices, and a null space method can be applied to the matrix equations.  The solution is unique under certain circumstances, which can be analyzed using the GSVD~\cite{barrlund1998efficient}.  

In cases where $\bfA$ is not a KP matrix, there are methods to approximate it by a Kronecker product $\bfA \approx \bfC \otimes \bfD$ so that the benefits of the KP structure can still be utilized. The nearest Kronecker product can be found by minimizing
\begin{align*}
    \norm{\bfA - \bfC \otimes \bfD}_F^2.
\end{align*}
Van Loan and Pitsianis \cite{van1993approximation} developed a method for solving this problem, and in~\cite{pitsianis1997kronecker}, Pitsianis  extended it to solve the problem
\begin{align*}
    \norm{\bfA - \sum_{i=1}^k \bfC_i \otimes \bfD_i}_F^2.
\end{align*}
These KP approximations have been applied to the forward matrix $\bfA$ in imaging problems \cite{bouhamidi2012kronecker,garvey2018singular,kamm1998kronecker,kamm2000optimal}.

\textbf{Main Contributions: }
We utilize joint decompositions that exploit the KP structure, computed at the first iteration only, to solve \cref{eq:Lxhsep} with SB and MM where the regularization parameter is selected at each iteration as in \cite{sweeney2024parameter}. We introduce a family of regularization operators that are column orthogonal. This family includes framelet and wavelet matrices. We show that with these matrices, we can obtain the GSVDs needed for the joint decompositions directly from the singular value decompositions (SVDs) of $\bfA_1$ and $\bfA_2$, allowing us to use framelets and wavelets without computing a GSVD.  Other regularization matrices, such as finite-difference operators, would require computing GSVDs to implement these same decomposition-based algorithms.  This makes using framelet and wavelet operators efficient for solving $\ell_1$-regularized problems with SB and MM. We also show that the solution of the generalized Tikhonov problem with a fixed regularization parameter will generate the same solution using framelets and wavelets.  

This paper is organized as follows. In \cref{sec:Solver}, we review two iterative methods for solving \cref{eq:Lxh}, split Bregman and Majorization-Minimization.  In \cref{sec:GSVDKP}, the GSVD is presented and we discuss how it can be used to obtain a joint decomposition that can then be utilized within the iterative methods. Theoretical results are also presented for when $\bfL_1$ and $\bfL_2$ are column orthogonal, which allow this joint decomposition to be obtained from the SVD rather than the GSVD.  
In \cref{sec:regop}, we introduce a family of column-orthogonal regularization operators that include the framelet and wavelet regularization matrices. The iterative methods are then tested on image deblurring examples in \cref{sec:num}. Although the results presented in \cref{sec:GSVDKP} apply to a general $\bfA$, the examples are presented for cases with $m\geq n$. Conclusions are presented in \cref{sec:conc}.

\section{Iterative techniques for solution of the sparsity constrained problem}\label{sec:Solver}
In this section, we will consider solving \cref{eq:Lxh} using two iterative methods: the split Bregman (SB) method and the Majorization-Minimization (MM) method. Both methods only require computing two GSVDs, which can be reused for every iteration.

\subsection{Split Bregman}
The SB method was first introduced by Goldstein and Osher \cite{GO}. This method rewrites \cref{eq:Lxh} as a constrained optimization problem
\begin{align}\label{eq:l11}
    \min_{\bfx}\left\{\frac{1}{2}\|\bfA \bfx - \bfb \|^2_2 + \mu \|\bfd \|_1\right\} \quad \text{ s.t. } \bfL \bfx = \bfd,
\end{align}
which can then be converted to the unconstrained optimization problem
\begin{align} \label{eq:Breg1}
    \min_{\bfx,\bfd}\left\{\frac{1}{2}\|\bfA \bfx - \bfb \|^2_2 + \frac{\lambda^2}{2} \|\bfL \bfx - \bfd \|^2_2 + \mu \|\bfd \|_1\right\}.
\end{align}
This unconstrained optimization problem can be solved through a series of minimization steps and updates, here indicated by superscripts for iteration $k$,  known as the SB iteration
\begin{align}\label{eq:Breg2}
    (\bfx^{(k+1)}, \bfd^{(k+1)}) &= \argmin_{\bfx,\bfd}\left\{\frac{1}{2}\|\bfA \bfx - \bfb \|^2_2 + \frac{\lambda^2}{2}\|\bfL \bfx - \bfd^{(k)}+ \bfg^{(k)} \|^2_2 + \mu \|\bfd \|_1\right\} \\
    \bfg^{(k+1)} &= \bfg^{(k)} + (\bfL \bfx^{(k+1)} - \bfd^{(k+1)}). \label{eq:Breg2a}
\end{align}
In \cref{eq:Breg2}, the vectors $\bfx$ and $\bfd$ can be found separately as
\begin{align}\label{eq:Breg3}
    \bfx^{(k+1)} &= \argmin_\bfx \left\{\frac{1}{2}\|\bfA \bfx-\bfb\|_2^2 +\frac{\lambda^2}{2} \|\bfL\bfx -(\bfd^{(k)}-\bfg^{(k)})\|_2^2\right\}  \\ 
\bfd^{(k+1)}&= \argmin_\bfd \left\{\mu \|\bfd \|_1+\frac{\lambda^2}{2} \|\bfd-(\bfL\bfx^{(k+1)}+\bfg^{(k)})\|_2^2 \right\}. \label{eq:Breg4} 
\end{align}
Here, and in the update \cref{eq:Breg2a}, $\bfg$ is the vector of Lagrange multipliers. 
Defining $\tau = \mu/\lambda^2$, \cref{eq:Breg4} becomes
\begin{align}
    \bfd^{(k+1)}= \argmin_\bfd \left\{\tau \|\bfd \|_1+\frac{1}{2} \|\bfd-(\bfL\bfx^{(k+1)}+\bfg^{(k)})\|_2^2 \right\}. \label{eq:Breg5}
\end{align}
Since the elements of $\bfd$ are decoupled in \cref{eq:Breg5}, $\bfd^{(k+1)}$ can be computed using shrinkage operators. Consequently, each element is given by
\begin{align}
    d_j^{(k+1)} = \text{shrink}\left((\bfL\bfx^{(k+1)})_j+g^{(k)}_j,\tau\right),  \nonumber
\end{align}
where $\text{shrink}(x,\tau) = \text{sign}(x) \cdot \text{max}(|x|-\tau,0)$. We will also select the parameter $\lambda$ at each iteration by applying parameter selection methods to \cref{eq:Breg3} as in \cite{sweeney2024parameter}. We terminate iterations once the relative change in the solution $\bfx$, defined as
\begin{align}\label{eq:RCx}
    \text{RC}(\bfx^{(k+1)}) =\frac{\norm{\bfx^{(k+1)}-\bfx^{(k)}}_2}{\norm{\bfx^{(k)}}_2},
\end{align}
drops below a specified tolerance $tol$ or after a specified maximum number of iterations $K_{max}$ is reached.
The SB algorithm is summarized in \cref{alg:SB2}.
Notice that SB is related to applying the alternating direction method of multipliers (ADMM) to the augmented Lagrangian in \cref{eq:Breg1} \cite{esser2009applications}.  

\begin{algorithm}
\caption{The SB Method for the $\ell_2$-$\ell_1$ Problem \cref{eq:Lxh} }
\label{alg:SB2}
\begin{algorithmic}[1]
\Require $\bfA, \bfb, \bfL,\tau, \bfd^{(0)} = \bfg^{(0)} = {\bfzero}$, $tol$, $K_{max}$
\Ensure $\bfx$
    \For{$k=0,1,\dots$ until $\text{RC}(\bfx^{(k+1)}) < tol$ or $k = K_{max}$} 
    \State Estimate $\lambda^{(k)}$
    \State $\bfx^{(k+1)} = \argmin_\bfx \left\{\frac{1}{2}\|\bfA\bfx-\bfb\|_2^2 + \frac{\left(\lambda^{(k)}\right)^2}{2}\|\bfL\bfx-(\bfd^{(k)}-\bfg^{(k)})\|_2^2\right\}$
    \State $\bfd^{(k+1)} = \argmin_\bfd \left\{ \tau\|\bfd\|_1 + \frac{1}{2}\|\bfd-(\bfL\bfx^{(k+1)}+\bfg^{(k)})\|_2^2\right\}$
    \State $\bfg^{(k+1)} = \bfg^{(k)} + \left(\bfL\bfx^{(k+1)} - \bfd^{(k+1)}\right)$ 
    \EndFor
\end{algorithmic}
\end{algorithm}

\subsection{Majorization-Minimization}
The MM method is another iterative method for solving \cref{eq:Lxh}. It is an optimization method that utilizes two steps: majorization and minimization \cite{hunter2004tutorial}.  First, in the majorization step, the function is majorized with a surrogate convex function.  The convexity of this function is then utilized in the minimization step.  When applied to \cref{eq:Lxh}, MM is often combined with the generalized Krylov subspace (GKS) method \cite{buccini2020lp,huang2017majorization, pasha2020krylov} to solve large-scale problems. In this method, known as MM-GKS, the Krylov subspace is enlarged at each iteration, and MM is then applied to the problem in the subspace.  
In this paper, we will apply the MM method directly to \cref{eq:Lxh} instead of building a Krylov subspace.  For the majorant, we will use the fixed quadratic majorant from \cite{huang2017majorization}.  There are other ways for majorizing \cref{eq:Lxh} as well, including both fixed and adaptive quadratic majorants \cite{alotaibi2021restoration,buccini2020lp,pasha2020krylov}.  

The fixed majorant in \cite{huang2017majorization} depends on a parameter $\varepsilon > 0$. With this fixed majorant, the minimization problem at the $k^{\text{th}}$ iteration is
\begin{align} \label{eq:MMsolve}
    \min_{\bfx}\left\{\frac{1}{2}\|\bfA\bfx-\bfb\|^2_2 + \frac{\lambda^2}{2}\|\bfL\bfx-\bfw_{reg}^{(k)}\|^2_2\right\},
\end{align}
where $\lambda = \sqrt{\mu/\varepsilon}$ and
\begin{align}
    \bfw_{reg}^{(k)} = \bfu^{(k)}\left(1-\left(\frac{\varepsilon^2}{(\bfu^{(k)})^2+\varepsilon^2}\right)^{\frac{1}{2}}\right) \label{eq:wreg}
\end{align}
with $\bfu^{(k)} = \bfL \bfx^{(k)}$. All operations in \cref{eq:wreg} are component-wise.  As in \cite{sweeney2024parameter}, we will consider selecting the parameter $\lambda$ at each iteration using the minimization in \cref{eq:MMsolve}.  The iterations are again terminated when $\text{RC}(\bfx^{(k+1)})$ drops below a specified tolerance $tol$ or after $K_{max}$ iterations are reached. The MM algorithm is summarized in \Cref{alg:MML2}.

\begin{algorithm}
\caption{The MM Method for the $\ell_2$-$\ell_1$ Problem \cref{eq:Lxh} with a Fixed Quadratic Majorant}
\label{alg:MML2} 
\begin{algorithmic}[1]
\Require $\bfA, \bfb, \bfL, \bfx^{(0)}, \varepsilon$, $tol$, $K_{max}$
\Ensure $\bfx$
    \For{$k=0,1,\dots$ until $\text{RC}(\bfx^{(k+1)}) < tol$ or $k = K_{max}$} 
    \State $\bfu^{(k)} = \bfL\bfx^{(k)}$
    \State $\bfw_{reg}^{(k)} = \bfu^{(k)} \left(1-\left(\frac{\varepsilon^2}{\left(\bfu^{(k)}\right)^2+\varepsilon^2}\right)^{\frac{1}{2}}\right)$
    \State Estimate $\lambda^{(k)}$
    \State $\bfx^{(k+1)} = \argmin_{\bfx}\left\{ \frac{1}{2}\|\bfA\bfx-\bfb\|^2_2 + \frac{\left(\lambda^{(k)}\right)^2}{2}\|\bfL\bfx-\bfw_{reg}^{(k)}\|^2_2\right\}$
    \EndFor
\end{algorithmic}
\end{algorithm}

\subsection{Parameter Selection Methods}
In both iterative methods, we will solve a minimization problem of the form
\begin{align}\label{eq:Lxh2}
    \min_\bfx\left\{\frac{1}{2}\norm{\bfA\bfx-\bfb}_2^2 + \frac{\left(\lambda^{(k)}\right)^2}{2}\norm{\bfL\bfx-\bfh^{(k)}}_2^2\right\},
\end{align}
at each iteration.
To select $\lambda^{(k)}$, we will use both generalized cross validation (GCV) and the $\chi^2$ degrees of freedom (dof) test as discussed in the context of application to SB and MM algorithms in \cite{sweeney2024parameter}. GCV \cite{aster2018parameter, golub1979generalized} selects $\lambda$ to minimize the average predictive risk when an entry of $\bfb$ is left out.  This method does not require any information about the noise.  We will follow \cite{sweeney2024parameter} in which the GCV is extended to this inner minimization problem.  

The $\chi^2$ dof test \cite{mead2008parameter,mead2008newton,renaut2010regularization} is a statistical test that treats $\bfx$ as a random variable with mean $\bfx_0$ and utilizes information about the noise to select $\lambda$.  Under the assumption that $\bfe$ is normally distributed with mean ${\bf{0}}$ and covariance matrix $\bfI$, denoted $\bfe \sim \mathcal{N}({\bf{0}},\bfI)$, and $\bfL(\bfx - \bfx_0)\sim \mathcal{N}({\bf{0}},\lambda^{-2}\bfI)$, the functional
\begin{align*}\label{eq:chirls1}
   J(\bfx) = \|\bfA \bfx - \bfb\|_2^2 + \lambda^2\norm{\bfL(\bfx - \bfx_0)}_2^2
\end{align*}
follows a $\chi^2$ distribution with $\text{rank}(\bfL) + \max{(m-n,0)}$ dof \cite{sweeney2024parameter}.  The $\chi^2$ dof test then selects $\lambda$ so that $J$ most resembles this $\chi^2$ distribution. As an approximation of $\bfx_0$, we will use $\bfx_0 = \bfL_\bfA^\dagger\bfh^{(k)}$ as in \cite{sweeney2024parameter}. In this case, the determination of $\lambda$ is implemented efficiently as a one-dimensional root-finding algorithm. On the other hand, if $\bfx_0$ is not the mean of $\bfx$, then $J$ instead follows a non-central $\chi^2$ distribution \cite{renaut2010regularization} and a non-central $\chi^2$ test can be applied in a similar, but more difficult manner to select $\lambda$.

\section{Solving Generalized Tikhonov with Kronecker Product Matrices}\label{sec:GSVDKP}
Under the assumption that both $\bfA$ and $\bfL$ are KP matrices, we can solve the minimization problem \cref{eq:Lxh2} occurring for both the SB and MM methods by using a joint decomposition resulting from the individual GSVDs of 
$\{\bfA_1,\bfL_1\}$ and $\{\bfA_2,\bfL_2\}$. 
The GSVD is a joint decomposition of a matrix pair $\{\bfA,\bfL\}$ introduced by Van Loan \cite{van1976generalizing}. It has been formulated in different ways \cite{howland2004generalizing,paige1981towards}, but for this paper, we use the definition of the GSVD given in \cite{howland2004generalizing}. 
 \begin{lemma}\label{lem:shortgsvd}
 For $\bfA \in \mathbb{R}^{m \times n}$ and $\bfL \in \mathbb{R}^{p \times n}$, let $\bfK = \left[\bfA;\bfL\right]$. Let $t = \text{rank}(\bfK)$, $r = \text{rank}(\bfK) - \text{rank}(\bfL)$, and $s = \text{rank}(\bfA) + \text{rank}(\bfL) - \text{rank}(\bfK)$.    Then, there exist orthogonal matrices $\bfU   \in \mathbb{R}^{m \times m}$,  $\bfV   \in \mathbb{R}^{p \times p}$, $\bfW  \in \mathbb{R}^{t \times t}$, and $\bfQ  \in \mathbb{R}^{n \times n}$, and non-singular matrix $\bfR \in \mathbb{R}^{t \times t}$ such that 
\begin{equation}\label{eq:factorAL}
\bfU^\top\bfA \bfQ=\tbfUpsilon \left[\bfW^\top \bfR, \bfzero_{t \times (n-t)}\right] \quad \bfV^\top\bfL\bfQ=\tbfM \left[\bfW^\top \bfR, \bfzero_{t \times (n-t)}\right],
\end{equation}
where $\tbfUpsilon \in \mathbb{R}^{m \times t}$ and $\tbfM \in \mathbb{R}^{p \times t}$ with 
\begin{subequations}
\begin{align}
\tbfUpsilon&= \begin{bmatrix} \bfI_\bfA & & \\ & \bfUpsilon & \\ & & \bfzero_\bfA
\end{bmatrix}, \quad 
 \tbfM = \begin{bmatrix} \bfzero_\bfL & & \\ & \bfM & \\ & & \bfI_\bfL
\end{bmatrix} \label{eq:GSVD1} \\
\bfUpsilon&=\mathrm{diag}( \upsilon_{r+1}, \dots, \upsilon_{r+s}) \text{ with }
1>\upsilon_{r+1}\ge \upsilon_{r+2} \ge \dots \ge \upsilon_{r+s} > 0 \\
\bfM&=\mathrm{diag}(\mu_{r+1}, \dots, \mu_{r+s}) \text{ with } 
0< \mu_{r+1}\le \mu_{r+2} \le \dots \le \mu_{r+s} < 1,
\text{ and }  \\
 1&=\upsilon_i^2+\mu_i^2, i=r+1:r+s, \label{eq:GSVprop}
 \end{align}
 \end{subequations}
$\text{defining } \upsilon_i = 1, \; \mu_i=0 \text{ for } i< r+1, \text{ and } \upsilon_i=0, \; \mu_i=1, \text{ for } i> r+s.$ Here the identity matrices are $\bfI_\bfA \in \mathbb{R}^{r \times r} \text{ and }  \bfI_\bfL \in \mathbb{R}^{(t -r-s) \times (t -r-s)}$
and the zero matrices are $\bfzero_\bfA \in \mathbb{R}^{(m-r-s) \times (t-r-s)} \text{ and }  \bfzero_\bfL \in \mathbb{R}^{(p-t+r) \times r}.$
The singular values of $\bfR$ are the nonzero singular values of $\bfK$.
Defining
\begin{align*}
    \bfZ = \bfQ
    \begin{bmatrix}
        \bfR^\top \bfW \\ {\bf{0}}_{(n-t) \times t}
    \end{bmatrix},
\end{align*}
then $\bfA = \bfU \tbfUpsilon \bfZ^\top$ and $\bfL = \bfV \tbfM \bfZ^\top$, where $\bfZ \in \mathbb{R}^{n \times t}$ has full column rank. If $t=n$, then $\bfZ$ is invertible and we have that $\bfA = \bfU \tbfUpsilon \bfY^{-1}$ and $\bfL = \bfV \tbfM \bfY^{-1}$, where $\bfY^{-1} = \bfZ^\top$.
 \end{lemma}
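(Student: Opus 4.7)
The plan is to derive the decomposition in two stages: first a rank-revealing factorization of the stacked matrix $\bfK=[\bfA;\bfL]$, and then a CS decomposition applied to the resulting orthogonal blocks. Since $\mathrm{rank}(\bfK)=t$, there exist orthogonal matrices $\bfP\in\mathbb{R}^{(m+p)\times(m+p)}$ and $\bfQ\in\mathbb{R}^{n\times n}$ and an invertible $\bfR\in\mathbb{R}^{t\times t}$ with $\bfK\bfQ=\bfP\begin{bmatrix}\bfR & \bfzero_{t\times(n-t)}\\ \bfzero & \bfzero\end{bmatrix}$ (e.g., using a rank-revealing QR of $\bfK$ followed by a QR of the leading block, or directly the SVD of $\bfK$ with nonzero singular values collected into $\bfR$). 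Partitioning the first $t$ columns of $\bfP$ conformably as $\bfP_{[:,1:t]}=\begin{bmatrix}\bfP_1\\\bfP_2\end{bmatrix}$ with $\bfP_1\in\mathbb{R}^{m\times t}$ and $\bfP_2\in\mathbb{R}^{p\times t}$, this yields
\begin{equation*}
\bfA\bfQ=\bigl[\bfP_1\bfR,\,\bfzero_{m\times(n-t)}\bigr],\qquad \bfL\bfQ=\bigl[\bfP_2\bfR,\,\bfzero_{p\times(n-t)}\bigr],
\end{equation*}
and the identity $\bfP_1^\top\bfP_1+\bfP_2^\top\bfP_2=\bfI_t$ follows from orthonormality of the columns of $\bfP_{[:,1:t]}$.

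Next I would invoke the CS decomposition for the pair $(\bfP_1,\bfP_2)$: there exist orthogonal $\bfU\in\mathbb{R}^{m\times m}$, $\bfV\in\mathbb{R}^{p\times p}$, $\bfW\in\mathbb{R}^{t\times t}$ such that $\bfU^\top\bfP_1\bfW=\tbfUpsilon$ and $\bfV^\top\bfP_2\bfW=\tbfM$, with $\tbfUpsilon,\tbfM$ having precisely the block-diagonal forms in \cref{eq:GSVD1}. The sizes of the identity and zero blocks are forced by the ranks: the number of columns of $\bfP_1$ with no corresponding nonzero in $\bfP_2$ is $t-\mathrm{rank}(\bfL)=r$, the number of columns of $\bfP_2$ with no corresponding nonzero in $\bfP_1$ is $t-\mathrm{rank}(\bfA)=t-r-s$, and the block-size $s$ for the coupled (strictly diagonal) part comes out as $\mathrm{rank}(\bfA)+\mathrm{rank}(\bfL)-t$. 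The relation $\upsilon_i^2+\mu_i^2=1$ in \cref{eq:GSVprop} is exactly the defining property of the CS decomposition. Substituting $\bfP_1=\bfU\tbfUpsilon\bfW^\top$ and $\bfP_2=\bfV\tbfM\bfW^\top$ back gives \cref{eq:factorAL}.

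For the final claim, set $\bfZ=\bfQ\begin{bmatrix}\bfR^\top\bfW\\\bfzero_{(n-t)\times t}\end{bmatrix}$. A direct computation shows $\bfU\tbfUpsilon\bfZ^\top=\bfU\tbfUpsilon[\bfW^\top\bfR,\bfzero]\bfQ^\top=\bfA$, and analogously $\bfV\tbfM\bfZ^\top=\bfL$. The full column rank of $\bfZ$ follows because $\bfQ$ is orthogonal and $\bfR^\top\bfW$ is a product of a nonsingular and an orthogonal $t\times t$ matrix. When $t=n$, $\bfZ=\bfQ\bfR^\top\bfW$ is the product of three invertible matrices, hence invertible, and setting $\bfY^{-1}=\bfZ^\top$ gives the stated factorizations $\bfA=\bfU\tbfUpsilon\bfY^{-1}$ and $\bfL=\bfV\tbfM\bfY^{-1}$. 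The main obstacle is really the CS decomposition step: it is a nontrivial classical result, but since it is standard I would state it and cite it rather than reproducing its proof. Verifying that the block sizes in $\tbfUpsilon$ and $\tbfM$ match the rank counts $r$, $s$, $t-r-s$ prescribed in the statement is the bookkeeping step that requires the most care.
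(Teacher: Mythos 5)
Your proposal is correct, but note that the paper does not actually prove \cref{lem:shortgsvd}: it states the decomposition as a known formulation of the GSVD and defers to the cited literature (Howland and Park, and ultimately the Paige--Saunders/Van Loan construction). Your two-stage argument --- a rank-revealing factorization $\bfK\bfQ=\bfP\left[\begin{smallmatrix}\bfR & \bfzero\\ \bfzero & \bfzero\end{smallmatrix}\right]$ followed by the CS decomposition of the orthonormal blocks $\bfP_1,\bfP_2$ --- is precisely the standard route those references take, and your rank bookkeeping ($r+s=\text{rank}(\bfP_1)=\text{rank}(\bfA)$ since $\bfR$ is invertible, $t-r=\text{rank}(\bfP_2)=\text{rank}(\bfL)$) correctly pins down the block sizes in \cref{eq:GSVD1}. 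The only load-bearing ingredient you do not prove is the CS decomposition itself, which is reasonable to cite; the verification of \cref{eq:factorAL} and of the $\bfZ$, $\bfY^{-1}$ claims is then the routine computation you describe. So the proof is sound and is essentially the same argument the paper implicitly relies on, just written out rather than cited.
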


The ratios $\gamma_i=\upsilon_i/\mu_i$ in the GSVD are known as the generalized singular values (GSVs) \cite{hansen1989regularization} and are sorted in decreasing order in \cref{lem:shortgsvd}.  If $\mu_i=0$ and $\upsilon_i=1$, then the corresponding GSV $\gamma_i$ is infinite.  The columns of $\bfU$ that correspond to $\upsilon_i = 0$ span $\mbox{Null}(\bfA)$ while the columns of $\bfV$ that correspond to $\mu_i = 0$ span $\mbox{Null}(\bfL)$.

From the GSVDs of the factors $\{\bfA_1,\bfL_1\}$ and $\{\bfA_2,\bfL_2\}$, 
 \begin{equation}\label{eq:2GSVDs}
 \begin{bmatrix}
 \bfA_1 \\
\bfL_1
 \end{bmatrix}  
 = 
\begin{bmatrix}
 \bfU_1\tbfUpsilon_1 \bfZ_1^\top  \bf\\
\bfV_1 \tbfM_1 \bfZ_1^\top
 \end{bmatrix} \quad \rm and \quad  \begin{bmatrix}
 \bfA_2 \\
\bfL_2
 \end{bmatrix}  
 = 
\begin{bmatrix}
 \bfU_2\tbfUpsilon_2 \bfZ_2^\top  \bf\\
\bfV_2 \tbfM _2 \bfZ_2^\top
 \end{bmatrix},
\end{equation}
we can construct a joint decomposition of $\bfA$ and $\bfL$ using the properties of the Kronecker product~\cite{van2000ubiquitous}.
 This joint decomposition is  given by
 \begin{equation}\label{eq:GSVDKP}
\bfK = 
\begin{bmatrix}
\bfA_1 \otimes \bfA_2 \\
\bfL_1 \otimes \bfL_2
\end{bmatrix}
=
\begin{bmatrix}
\bfU_1 \otimes \bfU_2 & \bfzero\\
\bfzero & \bfV_1 \otimes \bfV_2
\end{bmatrix}
\begin{bmatrix} 
\tbfUpsilon_1 \otimes \tbfUpsilon_2\\
\tbfM_1 \otimes \tbfM_2
\end{bmatrix}
\begin{bmatrix}
\bfZ_1^\top\otimes \bfZ_2^\top
\end{bmatrix}.
\end{equation}

Let us assume, without loss of generality, that $\bfx$ is square, meaning that $\text{array}(\bfx) = \bfX \in \mathbb{R}^{n \times n}$,  where $\text{array}(\bfx)$ is an operator that reshapes the vector $\bfx \in \mathbb{R}^{n^2 \times 1}$ into an $n \times n$ matrix.  We will also assume that $\bfA_1,\bfA_2,\bfL_1,\bfL_2$ have at least as many rows as columns.
With assumption \cref{eq:nullprop}, $t=n$ in \cref{lem:shortgsvd} and the solution to \cref{eq:Lxh2} is then given as
\begin{align}\label{eq:innersol}
    \bfx = (\bfA^\top\bfA + \lambda^2\bfL^\top \bfL)^{-1}(\bfA^\top\bfb + \lambda^2\bfL^\top\bfh^{(k)}).
\end{align}
Since $t=n$, $\bfZ_1$, $\bfZ_2$, and $\bfZ = \bfZ_1 \otimes \bfZ_2$ are invertible, and we may set $\bfZ_1^\top = \bfY_1^{-1}$ and $\bfZ_2^\top = \bfY_2^{-1}$. 
Let $\tilde{\bfU}_1$ and $\tilde{\bfU}_2$ be the first $n$ columns of $\bfU_1$ and $\bfU_2$, respectively, and let $\tilde{\bfV}_1$ and $\tilde{\bfV}_2$ be the last $n$ columns of $\bfV_1$ and $\bfV_2$, respectively.
Define also $\tilde{\bfB} = \tbfU_2^\top\bfB\tbfU_1$ and $\tilde{\bfH}^{(k)} = \tilde{\bfV}_2^\top\bfH^{(k)}\tilde{\bfV}_1$, where $\bfB = \text{array}(\bfb)$ and $\bfH^{(k)} = \text{array}(\bfh^{(k)})$.  Then, using \cref{eq:GSVDKP} and \begin{align}\label{eq:KPvec}
    (\bfC \otimes \bfD)\text{vec}(\bfF) = \text{vec}(\bfD\bfF\bfC^\top),
\end{align}
where $\text{vec}(\bfF)$ is the vectorization of the matrix $\bfF$, the solution in \cref{eq:innersol} can be written as
\begin{align} \label{eq:xsolh}
    \bfx = \text{vec}(\bfY_2[ \tbfPhi \odot \tilde{\bfB}+ \tbfPsi \odot \tilde{\bfH}^{(k)})]\bfY_1^\top),
\end{align}
where $\odot$ denotes element-wise multiplication and $\tbfPhi$ and $\tbfPsi$ are given by
\begin{align*}
    \tbfPhi &= \text{array}\left(\left[\frac{\upsilon_1}{\upsilon_1^2 +\lambda^2\mu_1^2},\dots,\frac{\upsilon_{n^2}}{\upsilon_{n^2}^2 +\lambda^2\mu_{n^2}^2}\right]^\top\right) \\
    \tbfPsi &= \text{array}\left(\left[\frac{\lambda^2\mu_1}{\upsilon_1^2 +\lambda^2\mu_1^2},\dots,\frac{\lambda^2\mu_{n^2}}{\upsilon_{n^2}^2 +\lambda^2\mu_{n^2}^2}\right]^\top\right).
\end{align*}
Here, $\upsilon_i$ and $\mu_i$ refer to the (potentially) non-zero entry in column $i$ of $\tbfUpsilon_1 \otimes \tbfUpsilon_2$ or $\tbfM_1 \otimes \tbfM_2$.

In both SB and MM, $\bfL\bfx$ must also be computed at each iteration.  
This computation can be written using \cref{eq:KPvec} as 
\begin{align}\label{eq:Lx}
    \bfL\bfx = \text{vec}\left(\tilde{\bfV}_2
       [\hat{\bfM} \odot(\bfY_2^{-1}\bfX\bfY_1^{-\top})]\tilde{\bfV}_1^\top\right),
\end{align}
where $\hat{\bfM} = \text{array}(\left[\mu_1,\dots,\mu_{n^2}\right]^\top)$. 

In the $\chi^2$ dof test, we also compute $\bfL^\dagger_\bfA \bfh^{(k)}$.  Defining $\overline{\bfm} \in \mathbb{R}^{n^2}$ by 
\begin{align*}
    \overline{m}_i = \begin{cases}
        1/\mu_i, &\mu_i >0 \\
        0, &\mu_i = 0
    \end{cases},
\end{align*}
and $\overline{\bfM} = \text{array}(\overline{\bfm})$, we have that
\begin{equation}\label{eq:LAh}
    \bfL^\dagger_\bfA \bfh^{(k)} = \text{vec}(\bfY_2\overline{\bfM}\tilde{\bfH}^{(k)}\bfY_1^\top).
\end{equation}
Notice that \cref{eq:xsolh}, \cref{eq:Lx}, and \cref{eq:LAh} allow each step in the iterative methods to be computed using only the individual GSVDs in \cref{eq:2GSVDs}.

\subsection{Theoretical Properties of the GSVD of Column-Orthogonal Matrices}
Here, we examine the properties of the GSVD of a matrix pair $\{\bfA,\bfL\}$ where one is a column-orthogonal matrix. We refer to a matrix $\bfC \in \mathbb{R}^{t \times n}$, $t \geq n$, as column orthogonal if $\bfC^\top\bfC = \bfI_n$. 
First, we recall the singular value decomposition (SVD) of $\bfA$. The SVD is a decomposition of the form $\bfA = \bfU_\bfA\boldsymbol{\Sigma}\bfV_\bfA^\top$, where $\bfU_\bfA \in \mathbb{R}^{m \times m}$ and $\bfV_\bfA \in \mathbb{R}^{n \times n}$ are orthogonal and $\boldsymbol{\Sigma} \in \mathbb{R}^{m \times n}$ is diagonal with entries $\sigma_1,\dots,\sigma_{\tilde{m}}$ and $\tilde{m} = \min\{m,n\}$. These diagonal entries are known as the singular values (SVs) of $\bfA$ and are sorted in decreasing order: $\sigma_1 \geq \sigma_2 \geq \dots \geq \sigma_{\tilde{m}} \geq 0$.  The rank of $\bfA$ is equal to the number of non-zero SVs.  
It is well known that when $\bfL = \bfI$, the GSVD of $\{\bfA,\bfL\}$ reduces to the SVD of $\bfA$ with $\gamma_i = \sigma_i$, where the $\gamma_i$ are sorted in decreasing order \cite{schimpf1997robust,van1989analysis}.  If $\bfL$ has full column rank, the GSVs are the SVs of $\bfA\bfL^\dagger$ \cite{dykes2014simplified,edelman2020gsvd,li2024characterizing,zha1996computing}. 
If, however, $\bfL$ is column orthogonal, we can say more about the GSVD of $\{\bfA,\bfL\}$ as described by the following lemma. 
\begin{lemma} \label{lem:orthoGSV}
    Let $\bfL$ be column orthogonal and $k = \text{rank}(\bfA)$.  Then, in the GSVD of $\{\bfA,\bfL\}$,   $\gamma_i = \sigma_i$ for $i=1,\dots, \tilde{m}$ and $\gamma_i = 0$ for $i=\tilde{m}+1,\dots,n$. Further, $\bfY = \bfV_\bfA \bfG$, where $\bfG = \text{diag}(1/\sqrt{\sigma_1^2+1},\dots,1/\sqrt{\sigma_k^2+1},\bfone_{n-k})$. The $\gamma_i$ are sorted in decreasing order as in \cref{lem:shortgsvd}.
\end{lemma}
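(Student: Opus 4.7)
The plan is to construct a specific GSVD of $\{\bfA,\bfL\}$ directly from the SVD of $\bfA$. First, column orthogonality gives $\text{rank}(\bfL) = n$, which forces $\text{rank}(\bfK) = n$, and hence in the notation of \cref{lem:shortgsvd} we have $t = n$, $r = 0$, and $s = k$. Consequently $\bfY$ is invertible with $\bfA = \bfU \tbfUpsilon \bfY^{-1}$ and $\bfL = \bfV \tbfM \bfY^{-1}$, and the block structures of $\tbfUpsilon$ and $\tbfM$ yield the identity $\tbfUpsilon^\top \tbfUpsilon + \tbfM^\top \tbfM = \bfI_n$ (the central $k \times k$ diagonal contributes $\upsilon_i^2 + \mu_i^2 = 1$, while the trailing $(n-k)\times(n-k)$ identity is a direct consequence of $\mu_i = 1$ for $k<i\le n$).

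Combining this identity with $\bfL^\top \bfL = \bfI_n$ I would deduce
\[
    \bfY^{-\top} \bfY^{-1} = \bfA^\top \bfA + \bfL^\top \bfL = \bfV_\bfA (\boldsymbol{\Sigma}^\top \boldsymbol{\Sigma} + \bfI_n) \bfV_\bfA^\top,
\]
so that $\bfY \bfY^\top = \bfV_\bfA \bfG^2 \bfV_\bfA^\top$. Because the GSVD is determined only up to orthogonal transformations on the right of $\bfY$, I would use this freedom to fix $\bfY = \bfV_\bfA \bfG$. Substituting into $\bfA = \bfU \tbfUpsilon \bfY^{-1}$ gives $\bfA \bfY = \bfU_\bfA \boldsymbol{\Sigma} \bfG = \bfU \tbfUpsilon$, so column matching forces $\upsilon_i = \sigma_i/\sqrt{\sigma_i^2+1}$ (with $\bfu_i$ equal to the $i$-th left singular vector of $\bfA$) for $i \le k$; the relation $\upsilon_i^2 + \mu_i^2 = 1$ then yields $\mu_i = 1/\sqrt{\sigma_i^2+1}$, so $\gamma_i = \upsilon_i/\mu_i = \sigma_i$ for $i \le k$, while $\upsilon_i = 0$ forces $\gamma_i = 0$ for $k < i \le n$. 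Since $\sigma_i = 0$ for $k < i \le \tilde{m}$, this matches $\gamma_i = \sigma_i$ for $i \le \tilde{m}$ exactly, and the decreasing sort order is preserved automatically because $\sigma \mapsto \sigma/\sqrt{\sigma^2+1}$ is monotone.

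To close the argument I would verify that $\bfY = \bfV_\bfA \bfG$ genuinely arises from a valid GSVD by exhibiting the remaining orthogonal factors explicitly: the last $n$ columns of $\bfV$ may be taken as $\bfL$ applied to the right singular vectors of $\bfA$ (orthonormal because $\bfL^\top \bfL = \bfI_n$), extended arbitrarily to a full $p \times p$ orthogonal matrix, and the first $k$ left singular vectors of $\bfA$ may be extended to an orthogonal $\bfU \in \mathbb{R}^{m \times m}$. The main subtlety will be justifying the specific choice $\bfY = \bfV_\bfA \bfG$ within the non-uniqueness of the GSVD and aligning the placements of the $\bfM$ and $\bfI_\bfL$ blocks of $\tbfM$ (and analogously for $\tbfUpsilon$) with the row/column conventions of \cref{lem:shortgsvd}; once these bookkeeping issues are handled, the rest of the proof is algebraic verification.
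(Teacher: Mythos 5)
Your proposal is correct and, once your final verification step is carried out, it coincides with the paper's proof: the paper constructs exactly the factors you exhibit ($\bfU=\bfU_\bfA$, $\bfV$ an orthogonal completion of $\bfL\bfV_\bfA$, $\bfUpsilon=\boldsymbol{\Sigma}\bfG$, $\bfM=\bfG$ padded with zero rows, $\bfY=\bfV_\bfA\bfG$) by inserting $\bfG\bfG^{-1}$ into $[\boldsymbol{\Sigma};\bfI]\,\bfV_\bfA^\top$. Your preliminary derivation of $\bfG$ from $\bfY^{-\top}\bfY^{-1}=\bfA^\top\bfA+\bfL^\top\bfL$ is sound motivation (though note the GSVD's non-uniqueness is more restricted than arbitrary right orthogonal transformations of $\bfY$, so that step alone does not ``fix'' $\bfY$), but it is the explicit construction at the end that carries the proof, and that is precisely the paper's argument.
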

\begin{proof}
Let $k$ be the rank of $\bfA$ and 
let the SVD of $\bfA$ be given by $\bfA = \bfU_\bfA\boldsymbol{\Sigma}\bfV_\bfA^\top$. Since $\bfV_\bfA$ is orthogonal, we can write that $\bfL = \bfL \bfV_\bfA\bfI\bfV_\bfA^\top$.  This provides a joint decomposition of the form
 \begin{align}
\begin{bmatrix} \bfA \\ \bfL \end{bmatrix} = \begin{bmatrix}
\bfU_\bfA & {\bf{0}} \\ {\bf{0}} & \bfL \bfV_\bfA \end{bmatrix} \begin{bmatrix}
\boldsymbol{\Sigma} \\ \bfI \end{bmatrix}
\bfV_\bfA^\top.
\end{align}
Since $\bfG$ is invertible, we have that
\begin{align} \label{eq:GSVDSV}
\begin{bmatrix}
\boldsymbol{\Sigma} \\ \bfI \end{bmatrix}
\bfV_\bfA^\top =  \begin{bmatrix}
\boldsymbol{\Sigma} \\ \bfI \end{bmatrix} \left(\bfG \bfG^{-1}\right)
\bfV_\bfA^\top  =
\left( \begin{bmatrix}
\boldsymbol{\Sigma} \\ \bfI \end{bmatrix} \bfG\right) \left(\bfG^{-1}
\bfV_\bfA^\top\right) =
\begin{bmatrix}
\boldsymbol{\Upsilon} \\ \bfM \end{bmatrix} \bfY^{-1},
 \end{align}  
 where $\bfUpsilon = \boldsymbol{\Sigma} \bfG$, $\bfM = \bfG$, and $\bfY = \bfV_\bfA \bfG$.  With this, 
 \begin{align*}
     \bfUpsilon = \text{diag}\left(\frac{\sigma_1}{\sqrt{\sigma_1^2+1}},\dots,\frac{\sigma_k}{\sqrt{\sigma_k^2+1}},{\bf{0}}_{\tilde{m}-k}\right) \text{and } \bfM = \text{diag}\left(\frac{1}{\sqrt{\sigma_1^2+1}},\dots,\frac{1}{\sqrt{\sigma_k^2+1}},{\bf{1}}_{n-k}\right).
 \end{align*}
Hence, \cref{eq:GSVprop} is satisfied.  $\bfU_\bfA$ is orthogonal from the SVD and $\bfL\bfV_\bfA$ is column orthogonal since $\bfL$ is column orthogonal and $\bfV_\bfA$ is orthogonal. $\bfL\bfV_\bfA$ is not necessarily orthogonal since $\bfL$ may have more rows than columns. In this case, we add columns to the left of $\bfL\bfV_\bfA$ to make it orthogonal. To maintain the decomposition, we also add rows of zeros to the top of $\bfM$ that correspond to these additional columns in $\bfL\bfV_\bfA$.  Following this, the decomposition is then a GSVD.

Notice that the GSVs for $i=1,\dots,k$ are
\begin{align*}
    \gamma_i = \frac{\upsilon_i}{\mu_i} = \frac{\sigma_i/\sqrt{\sigma_i^2+1}}{1/\sqrt{\sigma_i^2+1}} = \sigma_i,
\end{align*}
and for $i=k+1,\dots,\tilde{m}$ are $\gamma_i = 0/1 = 0 = \sigma_i$.
Thus, if $m \geq n$, the GSVs are the SVs of $\bfA$.  If $m <n$, then the GSVs for $i=m+1,\dots,n$ come from the columns of zeros in $\bfUpsilon$ and are $\gamma_i = 0/1 = 0$.   
From \cref{eq:GSVDSV}, we can also see that $\bfY$ is $\bfV_\bfA$ from the SVD of $\bfA$ with scaled columns.  In particular, the scaling implies that for $i=1,\dots,k$, $\mu_i = \norm{y_i}_2$ and $\upsilon_i = \sigma_i \norm{y_i}_2$. 
This also shows that $\bfV$ is related to $\bfV_\bfA$ and, by extension, to $\bfY$, since $\bfV=\bfL\bfV_\bfA = \bfL \bfY \bfG^{-1}$.
\end{proof}

If $\bfA$ is column orthogonal instead of $\bfL$, we obtain a similar result by flipping the roles of $\bfA$ and $\bfL$ and writing a decomposition of $\bfA$ in terms of the SVD of $\bfL$. In this case, the SVD of $\bfL$ determines the matrices in the GSVD of $\{\bfA,\bfL\}$. 
On the other hand, if $\bfA$ is column orthogonal, then $\bfA^\top\bfA=\bfI$ and the system of equations is not ill-posed. Thus, we immediately obtain $\bfx=\bfA^\top \bfb$, and we do not need the GSVD.

To obtain the same result as in \cref{lem:orthoGSV}, we can also consider the generalized eigenvalue problem \cite{parlett1998symmetric} involving the matrix pair $\{\bfA^\top\bfA,\bfL^\top\bfL\}$, in which the goal is to find a scalar $\lambda_g$ and non-zero vector $\bfx_g$ such that
\begin{align} \label{eq:geneig2}
    \bfA^\top\bfA \bfx_g = \lambda_g \bfL^\top\bfL \bfx_g,
\end{align}
where $\bfA^\top\bfA,\bfL^\top\bfL \in \mathbb{R}^{n \times n}$ are symmetric.
When $\bfL$ is column orthogonal, \cref{eq:geneig2} simplifies to the eigenvalue problem
\begin{align} \label{eq:eig2}
    \bfA^\top\bfA \bfx_s = \lambda_s \bfx_s,
\end{align}
since $\bfL^\top\bfL = \bfI$ \cite{sriperumbudur2011majorization,wang2016recurrent}.
\begin{lemma}
    When $\bfL$ is column orthogonal, the $\lambda_g$ and $\bfx_g$ that solve \cref{eq:geneig2} are the same as the $\lambda_s$ and $\bfx_s$ that solve \cref{eq:eig2}: $\lambda_g = \lambda_s$ and $\bfx_g = \bfx_s$. 
\end{lemma}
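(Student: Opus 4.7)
The plan is to observe that the statement follows immediately from the definition of column orthogonality, and to present it as a one-line substitution with a brief justification of bidirectionality. First I would invoke the assumption that $\bfL \in \mathbb{R}^{t \times n}$ with $t \geq n$ is column orthogonal, which by definition gives $\bfL^\top \bfL = \bfI_n$. Substituting this identity directly into the right-hand side of \cref{eq:geneig2} collapses $\lambda_g \bfL^\top \bfL \bfx_g$ to $\lambda_g \bfx_g$, so \cref{eq:geneig2} becomes $\bfA^\top \bfA \bfx_g = \lambda_g \bfx_g$, which is syntactically identical to \cref{eq:eig2}.

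Next I would argue both directions of the correspondence explicitly, since the claim is an equality of solution sets. In the forward direction, any $(\lambda_g, \bfx_g)$ with $\bfx_g \neq \bfzero$ solving \cref{eq:geneig2} satisfies the standard eigenvalue equation $\bfA^\top\bfA \bfx_g = \lambda_g \bfx_g$ by the substitution above, so $(\lambda_s, \bfx_s) = (\lambda_g, \bfx_g)$ is a valid solution of \cref{eq:eig2}. In the reverse direction, any $(\lambda_s, \bfx_s)$ solving \cref{eq:eig2} can be written as $\bfA^\top \bfA \bfx_s = \lambda_s \bfI \bfx_s = \lambda_s \bfL^\top \bfL \bfx_s$, which is exactly \cref{eq:geneig2}, giving the correspondence $(\lambda_g, \bfx_g) = (\lambda_s, \bfx_s)$.

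There is essentially no hard step here; the entire content of the lemma is the identity $\bfL^\top \bfL = \bfI$ for column-orthogonal $\bfL$. The only subtlety worth mentioning is that the lemma implicitly requires the eigenvectors to be nonzero (which is built into the definition of both eigenvalue problems), so no additional hypothesis on $\bfA$ or its null space is needed for the equivalence. I would therefore keep the proof to a short paragraph, emphasizing the substitution and the symmetric reversibility of the argument, and connect this lemma back to \cref{lem:orthoGSV} by noting that this provides an alternative route (via the eigendecomposition of $\bfA^\top \bfA$, i.e., the SVD of $\bfA$) to the same conclusion that the joint decomposition of $\{\bfA, \bfL\}$ is fully determined by the SVD of $\bfA$ when $\bfL$ is column orthogonal.
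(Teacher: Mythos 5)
Your proposal is correct and uses the same core idea as the paper's proof: substituting $\bfL^\top\bfL = \bfI_n$ into the generalized eigenvalue problem to reduce it to the standard one. The paper dresses this up in matrix form ($\bfA^\top\bfA\bfPhi = \bfL^\top\bfL\bfPhi\boldsymbol{\Lambda}$), prepends a remark that positive definiteness of $\bfA^\top\bfA + \bfL^\top\bfL$ guarantees real solutions, and appends the connection to the generalized singular values, but the mathematical content is identical to your one-line substitution with explicit bidirectionality.
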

\begin{proof}
Since $\bfA^\top\bfA + \bfL^\top\bfL$ is positive definite, there exists a solution to \cref{eq:geneig2} and any $\lambda_g$ and $\bfx_g$ that solve \cref{eq:geneig2} are real~\cite{gao2008continuous,parlett1998symmetric}. 
The $\lambda_g$ is a generalized eigenvalue and $\bfx_g$ is the associated eigenvector.  We can write \cref{eq:geneig2} to include all eigenvalues and eigenvectors as a matrix equation
\begin{align} \label{eq:geneig}
    \bfA^\top\bfA \bfPhi = \bfL^\top\bfL \bfPhi \boldsymbol{\Lambda},
\end{align}
where $\bfPhi \in \mathbb{R}^{n \times n}$ contains the $n$ generalized eigenvectors and $\boldsymbol{\Lambda} \in \mathbb{R}^{n \times n}$ is a diagonal matrix with the generalized eigenvalues.
When $\bfL$ is column orthogonal, \cref{eq:geneig2} simplifies to the eigenvalue problem for $\bfA^\top\bfA$:
\begin{align*} \label{eq:eig}
    \bfA^\top\bfA \bfPhi &= \bfPhi \boldsymbol{\Lambda}\\
    \bfPhi^\top \bfA^\top\bfA \bfPhi &=  \boldsymbol{\Lambda}.
\end{align*}
Thus, the eigenvalues of $\bfA^\top\bfA$ are the eigenvalues of $\boldsymbol{\Lambda}$, meaning that the generalized eigenvalues of $\{\bfA^\top\bfA,\bfL^\top\bfL\}$ are the eigenvalues of $\bfA^\top\bfA$.  The associated eigenvectors for $\bfA^\top\bfA$ are the columns of $\bfPhi$, which are also the generalized eigenvectors of $\{\bfA^\top\bfA,\bfL^\top\bfL\}$ from \cref{eq:geneig}. When $m \geq n$, this analysis can  be connected back to the GSVs. Since the SVs of $\bfA$ are the square roots of the eigenvalues of $\bfA^\top\bfA$ and the finite GSVs of $\{\bfA,\bfL\}$ are the square roots of the finite generalized eigenvalues of $\{\bfA^\top\bfA,\bfL^\top\bfL\}$ \cite{betcke2008generalized}, the GSVs of $\{\bfA,\bfL\}$ are the SVs of $\bfA$.
\end{proof}

From the proof of \cref{lem:orthoGSV}, we can see that when $\bfL$ is column orthogonal, the GSVD of $\{\bfA,\bfL\}$ can be obtained from the SVD of $\bfA$. This means that instead of computing the GSVD, we can use the SVD of $\bfA$.
Another result of \cref{lem:orthoGSV} is that the basis for the solution of \cref{eq:Lxh2} is 
determined solely by the right singular vectors of $\bfA$ and not by $\bfL$.  Similarly, the GSVs are also determined by the SVs of $\bfA$. 
The only matrix in the GSVD that changes between the framelets and wavelets is the $\bfV$ matrix, which is used in the solution \cref{eq:xsolh} when $\bfh^{(k)} \neq {\bf{0}}$. When $\bfh^{(k)} = {\bf{0}}$ though, as in the generalized Tikhonov problem \cref{eq:GenTik}, both $\bfL$ will produce the same solution, leading to the following corollary.

\begin{corollary} \label{cor:orthocol}
    Let $\bfL$ and $\tilde{\bfL}$ both be column orthogonal. Then, for a given $\lambda$, it makes no difference whether $\bfL$ or $\tilde{\bfL}$ is used as the regularization matrix for the generalized Tikhonov problem \cref{eq:GenTik} as the problem reduces to standard Tikhonov with $\bfL = \bfI$ for both matrices. Thus, the solutions will be the same, to machine precision. 
\end{corollary}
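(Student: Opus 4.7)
The plan is to argue directly from the normal equations of the generalized Tikhonov problem \cref{eq:GenTik}. First, I would write down the closed-form solution: since $\bfA$ has full column rank (and the null-space intersection condition \cref{eq:nullprop} trivially holds when $\bfL$ is column orthogonal, because $\mathrm{Null}(\bfL) = \{\bfzero\}$), the unique minimizer of \cref{eq:GenTik} is given by
\begin{equation*}
    \bfx = (\bfA^\top \bfA + \lambda^2 \bfL^\top \bfL)^{-1} \bfA^\top \bfb.
\end{equation*}

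Next, I would invoke the defining property of column orthogonality: $\bfL^\top \bfL = \bfI_n$ and likewise $\tilde{\bfL}^\top \tilde{\bfL} = \bfI_n$. Substituting each of these in turn into the normal equations yields
\begin{equation*}
    \bfx = (\bfA^\top \bfA + \lambda^2 \bfI_n)^{-1} \bfA^\top \bfb,
\end{equation*}
which is exactly the standard Tikhonov solution corresponding to $\bfL = \bfI$. Since the right-hand side does not involve $\bfL$ or $\tilde{\bfL}$ at all, both choices produce the same solution, and both coincide with the $\bfL = \bfI$ case. Uniqueness of the minimizer (the Tikhonov functional is strictly convex whenever $\bfA^\top\bfA + \lambda^2 \bfI$ is positive definite, which it is) closes the argument, with the only discrepancy being floating-point round-off.

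There is essentially no obstacle in this proof; it is a one-line observation built on the column-orthogonality identity $\bfL^\top\bfL = \bfI$. The only subtlety worth noting in the write-up is that one should flag why \cref{eq:nullprop} holds automatically in this setting (so that the closed-form solution is well-defined), and that the equivalence is at the level of the exact solution — finite-precision computation of the inverse may produce tiny differences, hence the caveat "to machine precision" in the statement. If desired, the result could alternatively be derived as a direct consequence of \cref{lem:orthoGSV} by noting that the GSVD-based formula \cref{eq:xsolh} with $\bfh^{(k)} = \bfzero$ depends on $\bfL$ only through $\tbfM$ and the right singular structure of $\bfA$, both of which are independent of the particular column-orthogonal $\bfL$.
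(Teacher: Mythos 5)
Your proof is correct and takes essentially the same route as the paper: the corollary's justification is precisely the observation that $\bfL^\top\bfL = \tilde{\bfL}^\top\tilde{\bfL} = \bfI$ collapses the normal equations of \cref{eq:GenTik} to the standard Tikhonov system $(\bfA^\top\bfA + \lambda^2\bfI)\bfx = \bfA^\top\bfb$, which is independent of the particular column-orthogonal operator. The paper frames this as a consequence of \cref{lem:orthoGSV} (the only $\bfL$-dependent GSVD factor, $\bfV$, enters the solution formula \cref{eq:xsolh} only through $\bfh^{(k)}$, which vanishes for generalized Tikhonov), a viewpoint you correctly identify as an equivalent alternative derivation.
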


When $\bfL$ is column orthogonal, \cref{eq:xsolh} can be written as
\begin{align} \label{eq:xsolh2}
    \bfx = \text{vec}(\bfY_2[ \overline{\tbfPhi} \odot \tilde{\bfB}+ \overline{\tbfPsi} \odot \tilde{\bfH}^{(k)})]\bfY_1^\top),
\end{align}
where
\begin{align*}
    \overline{\tbfPhi} = \text{array}\left(\left[\frac{\upsilon_1}{\upsilon_1^2 +\lambda^2},\dots,\frac{\upsilon_{n^2}}{\upsilon_{n^2}^2 +\lambda^2}\right]^\top\right) \mbox{ and }
    \overline{\tbfPsi} = \text{array}\left(\left[\frac{\lambda^2\mu_1}{\upsilon_1^2 +\lambda^2},\dots,\frac{\lambda^2\mu_{n^2}}{\upsilon_{n^2}^2 +\lambda^2}\right]^\top\right).
\end{align*}

\section{Family of Regularization Operators}\label{sec:regop}

This section presents a family of regularization operators $\bfL$, which we adopted for the solution of \cref{eq:Lxh}. Specifically, we will consider a two-level framelet analysis operator \cite{daubechies2003framelets,ron1997affine} and the Daubechies D4 discrete wavelet transform \cite{daubechies1992ten}.  

Consider a matrix that corresponds to a one-dimensional (1D) transform of the form \begin{align*}
    \bfL_{T}^{1D} = \begin{bmatrix}
        \bfT_0 \\ \bfT_1 \\ \vdots\\ \bfT_t
    \end{bmatrix},
\end{align*}
and its two-dimensional (2D) transform consists of the form \begin{align*}
    \bfL_T^{2D} = \begin{bmatrix}
        \bfT_{00} \\ \bfT_{01} \\ \vdots \\ \bfT_{ij} \\ \vdots \\ \bfT_{tt}
    \end{bmatrix},
\end{align*}
where $\bfT_{ij} = \bfT_i \otimes \bfT_j$ for $i,j=1,\dots,t.$
This then yields the  2D-transform-regularized problem
\begin{align} \label{eq:direct2D}
    \min_\bfx\left\{\frac{1}{2}\norm{(\bfA_1\otimes \bfA_2)\bfx-\bfb}_2^2 + \frac{\mu}{2}\norm{\bfL_T^{2D}\bfx}_1\right\}.
\end{align}

To solve \cref{eq:direct2D} we observe that
$\bfL_{T}^{1D} \otimes \bfL_{T}^{1D} = \bfP_T \bfL_T^{2D}$, 
where $\bfP_T$ is a permutation matrix.  Since 
\begin{align*} \label{eq:perm1norm}
     \norm{\bfL_T^{2D}\bfx}_1 = \norm{\bfP\bfL_T^{2D}\bfx}_1
\end{align*} 
for any permutation matrix $\bfP$, we have that
\begin{align*}
     \norm{\bfL_T^{2D}\bfx}_1 =\norm{\bfP_T\bfL_T^{2D}\bfx}_1= \norm{(\bfL_{T}^{1D} \otimes \bfL_{T}^{1D})\bfx}_1.
\end{align*} 
As a result, solving
\begin{align}\label{eq:frameletreg}
    \min_\bfx\left\{\frac{1}{2}\norm{(\bfA_1\otimes\bfA_2)\bfx-\bfb}_2^2 + \frac{\mu}{2}\norm{(\bfL_{T}^{1D} \otimes \bfL_{T}^{1D})\bfx}_1\right\},
\end{align}
is equivalent to solving \cref{eq:direct2D}.  Thus, we will solve \cref{eq:frameletreg} as it provides the advantage of the KP structure. 

\subsection{Framelets}\label{subsec:frames}
Following the presentation in \cite{buccini2020modulus}, we use tight frames that are determined by B-splines for regularization.  With these frames, we have one low-pass filter $\bfF_0 \in \mathbb{R}^{n \times n}$ and two high pass filters $\bfF_1,\bfF_2 \in \mathbb{R}^{n \times n}$ given by
\begin{equation*}
\scriptsize
    \bfF_0 = \frac{1}{4}\begin{bmatrix}
    3 & 1 & 0 & \cdots & 0 \\
    1 & 2 & 1 & & \\
    & \ddots & \ddots & \ddots & \\
    && 1 & 2 &1 \\
    0 & \cdots & 0 & 1 & 3
    \end{bmatrix}, \quad \bfF_1 = \frac{\sqrt{2}}{4}\begin{bmatrix}
    -1 & 1 & 0 & \cdots & 0 \\
    -1 & 0 & 1 & & \\
    & \ddots & \ddots & \ddots & \\
    && -1 & 0 &1 \\
    0 & \cdots & 0 & -1 & 1
    \end{bmatrix} \text{ and }
    \bfF_2 = \frac{1}{4}\begin{bmatrix}
    1 & -1 & 0 & \cdots & 0 \\
    -1 & 2 & -1 & & \\
    & \ddots & \ddots & \ddots & \\
    && -1 & 2 &-1 \\
    0 & \cdots & 0 & -1 & 1
    \end{bmatrix}.
\end{equation*}
The one-dimensional operator $\bfL_{F}^{1D} \in  \mathbb{R}^{3n \times n}$ is given by
\begin{align*}
    \bfL_{F}^{1D} = \begin{bmatrix}
        \bfF_0 \\ \bfF_1 \\ \bfF_2
    \end{bmatrix}.
\end{align*}
Since we are concerned with image deblurring in two dimensions, the two-dimensional operator is then constructed using the Kronecker product:
\begin{align*}
    \bfF_{ij} = \bfF_i \otimes \bfF_j, \quad i,j = 0,1,2.
\end{align*}
The matrix $\bfF_{00}$ is a low-pass filter, while each of the other matrices contains at least one of the high-pass filters. The 2D operator  $\bfL_F^{2D} \in \mathbb{R}^{9n^2 \times n^2}$,
obtained by the concatenation of these matrices, is given by  
\begin{align*}
    \bfL_F^{2D} = \begin{bmatrix}
        \bfF_{00} \\ \bfF_{01} \\ \bfF_{02} \\ \vdots \\ \bfF_{22}
    \end{bmatrix}.
\end{align*}

\subsection{Wavelets}\label{subsec:wavelets} 
Discrete wavelet transforms are composed of a low-pass filter, $\bfW_1^\top$ and a high-pass filter, $\bfW_2^\top$ \cite{espanol2009multilevel,van2019discrete}.  For a discrete wavelet transform of order $k$, let $h_0, h_1,\dots, h_{k-1}$ be the filter coefficients.  Then, the filter matrices have the forms 
\begin{align*}
    \bfW_1^\top &= \begin{bmatrix}
        h_0 & h_1 & \cdots & \cdots & h_{k-2} & h_{k-1} & & & & & & \\
        && h_0 & h_1 & \cdots & \cdots & h_{k-2} & h_{k-1} &&&& \\
        &&&& \cdots & \cdots &\cdots &\cdots&\cdots &\cdots && \\
        &&&&&& h_0 & h_1 & \cdots & \cdots & h_{k-2} & h_{k-1} \\
        h_{k-2} & h_{k-1} &&&&&&& h_0 & h_1& \cdots & \cdots \\
        \cdots &\cdots &\cdots&\cdots &&&&&&&\cdots & \cdots \\
        \cdots & \cdots & h_{k-2} & h_{k-1}&&&&&&&h_0&h_1
    \end{bmatrix}
    \\
    \bfW_2^\top &= \begin{bmatrix}
        g_0 & g_1 & \cdots & \cdots & g_{k-2} & g_{k-1} & & & & & & \\
        && g_0 & g_1 & \cdots & \cdots & g_{k-2} & g_{k-1} &&&& \\
        &&&& \cdots & \cdots &\cdots &\cdots&\cdots &\cdots && \\
        &&&&&& g_0 & g_1 & \cdots & \cdots & g_{k-2} & g_{k-1} \\
        g_{k-2} & g_{k-1} &&&&&&& g_0 & g_1& \cdots & \cdots \\
        \cdots &\cdots &\cdots&\cdots &&&&&&&\cdots & \cdots \\
        \cdots & \cdots & g_{k-2} & g_{k-1}&&&&&&&g_0&g_1
    \end{bmatrix},
\end{align*}
where $g_j =(-1)^j h_{k-j-1}, \; j=0,\dots,k-1$. The 1D DWT matrix is then 
\begin{align*}
    \bfL_{W}^{1D} = \begin{bmatrix}
        \bfW^\top_1 \\ \bfW_2^\top
    \end{bmatrix}.
\end{align*}
As with the framelets, the 2D operator is constructed by taking the Kronecker products of the low-pass and high-pass filters:
\begin{align*}
    \bfW^\top_{ij} = \bfW^\top_i \otimes \bfW^\top_j, \quad i,j = 0,1,2.
\end{align*}
The 2D operator is then
\begin{align*}
    \bfL_W^{2D} = \begin{bmatrix}
        \bfW_{11} \\ \bfW_{12} \\ \bfW_{21} \\ \bfW_{22}
    \end{bmatrix}.
\end{align*}
For the examples, we will use the D4 wavelet transform, which is of order 4 and has the filter coefficients
\begin{align*}
    h_0 &= \frac{1+\sqrt{3}}{2}, \, h_1 = \frac{3+\sqrt{3}}{2},\, h_2 = \frac{3-\sqrt{3}}{2}, \,h_3 = \frac{1-\sqrt{3}}{2}. 
\end{align*}

\section{Numerical Results}\label{sec:num}
In this section, the SB and MM methods are applied to image deblurring examples where the blur is separable. For the blurring matrix $\bfA$, we consider $\bfA_j  \in \mathbb{R}^{n \times n}$, $j=1$, $2$, to be a one-dimensional blur with either zero or periodic boundary conditions.  With zero boundary conditions, $\bfA_j$ is a symmetric Toeplitz matrix with the first row
\begin{align*}
    \bfz_j = \frac{1}{\sqrt{2\pi}\tilde{\sigma}_j}\begin{bmatrix}
        \text{exp}\left(\frac{-[0:\text{band}_j-1]^2}{2\tilde{\sigma}_j^2}\right) & {\bfzero_{n-\text{band}_j}}
    \end{bmatrix}, \label{eq:zrow1}
\end{align*}
while with periodic boundary conditions, $\bfA_j$ is a circulant matrix with first row
\begin{align*}
    \bfz_j = \frac{1}{\sqrt{2\pi}\tilde{\sigma}_j}\begin{bmatrix}
        \text{exp}\left(\frac{-[0:\text{band}_j-1]^2}{2\tilde{\sigma}_j^2}\right) & {\bfzero_{n-2\text{band}_j+1}} & \text{exp}\left(\frac{-[\text{band}_j-1:1]^2}{2\tilde{\sigma}_j^2}\right)
    \end{bmatrix}. \label{eq:zrow2}
\end{align*}
Here, $\tilde{\sigma}_j$ and $\text{band}_j$ are blur parameters.
We then add Gaussian noise $\bfe$ to $\bfb_{true}$ according to a specified blurred signal to noise ratio (BSNR), which is defined as
\begin{align*}\label{eq:BSNR}
    \text{BSNR} = 20\log_{10}\left(\frac{\norm{\bfb_{true}}_2}{\norm{\bfe}_2}\right).
\end{align*}   

The image is then deblurred using the SB and MM methods, with $\bfL$ being defined by framelets or wavelets as described in \cref{sec:regop}. We set set $tol=0.01$ and $K_{max} = 20$.   The methods are compared using the relative error (RE) defined by
\begin{align*}\label{eq:RE}
    \text{RE} = \frac{\norm{\bfx-\bfx_{true}}_2}{\norm{\bfx_{true}}_2},
\end{align*}
and the improved signal to noise ratio (ISNR), which is defined by
\begin{align*}\label{eq:ISNR}
    \text{ISNR} = 20\log_{10}\left(\frac{\norm{\bfb-\bfx_{true}}_2}{\norm{\bfx-\bfx_{true}}_2}\right).
\end{align*}
Both metrics measure the quality of the solution, with a smaller $\text{RE}$ and a larger $\text{ISNR}$ indicating a better solution.  

\subsection{Blurring Examples}\label{subsec:blur}

\begin{figure} 
    \centering
    \subfigure[$\bfx$ \label{Ex1:x}]{
\includegraphics[width=3.5cm]{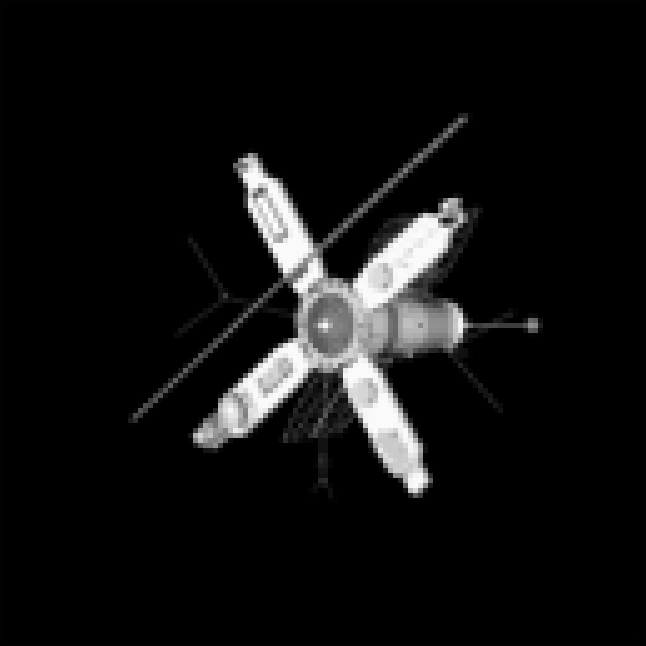}}
    \subfigure[PSF \label{Ex1:PSF}]{
\includegraphics[width=3.5cm]{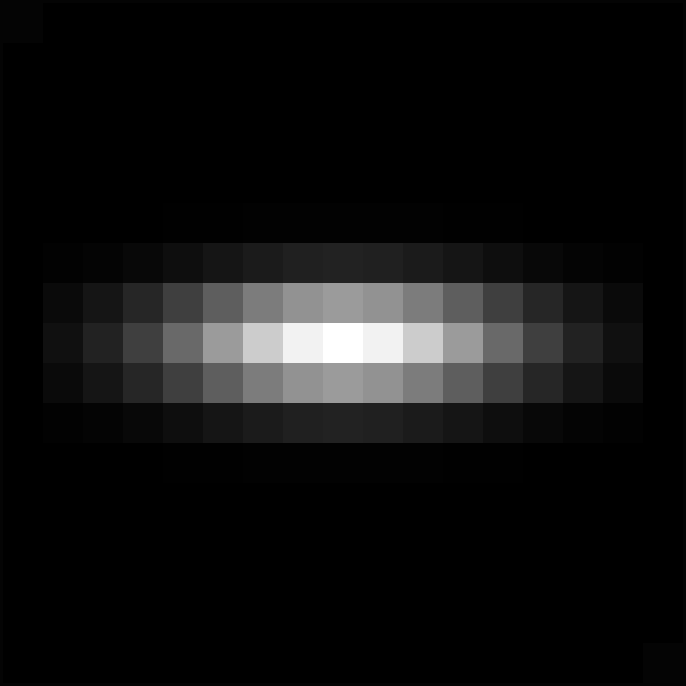}}
    \subfigure[$\bfb_{true}$ \label{Ex1:btrue}]{
\includegraphics[width=3.5cm]{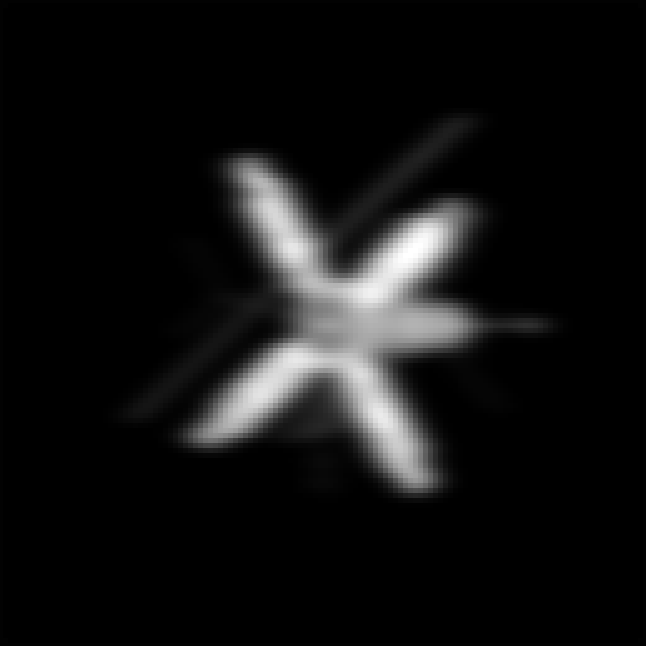}}
    \subfigure[$\bfb$ \label{Ex1:b}]{
\includegraphics[width=3.5cm]{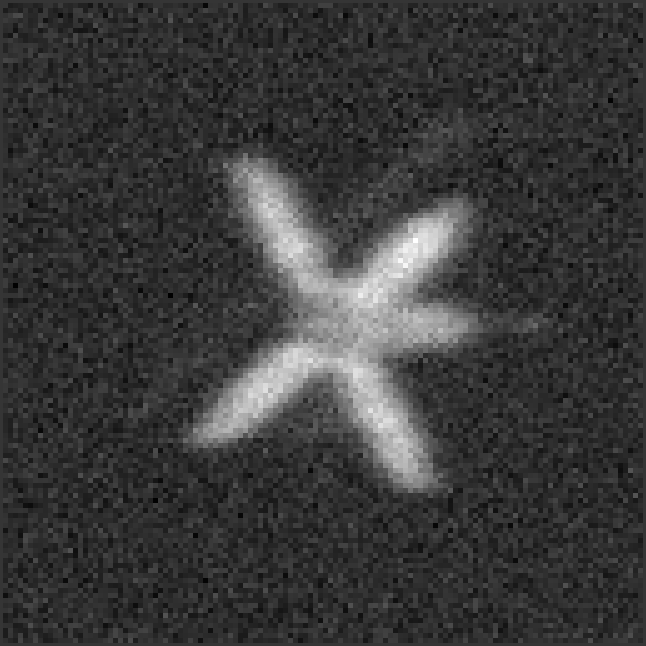}}
\caption{True image $\bfx$ ($128 \times 128$ pixels), PSF ($17 \times 17$ pixels), blurred image $\bfb_{true}$, and the blurred and noisy image $\bfb$ with $\text{BSNR}=10$ for Example 1.}
    \label{Ex1}
\end{figure}

\subsubsection{Example 1}
For the first example, we blur the satellite image in \cref{Ex1:x} with size $n = 128$.  For $\bfA$, we use zero boundary conditions, $\tilde{\sigma}_1 = 3$, $\tilde{\sigma}_2 = 1$, and $\text{band}_1 = \text{band}_2 = 15$. The corresponding point spread function (PSF) is shown in \cref{Ex1:PSF}, and the blurred image $\bfb_{true}$ is shown in \cref{Ex1:btrue}. Gaussian noise with $\text{BSNR} = 10$ is added to obtain $\bfb$ as seen in \cref{Ex1:b}.  This problem is then solved using SB with $\tau=0.04$ and MM with $\varepsilon = 0.03$.
The solutions in \cref{Ex1R} show that framelet regularization produces solutions with fewer artifacts in the background.  The RE and ISNR for these methods are shown in \cref{Ex1Conv}.  This shows that SB and MM perform similarly and that the framelet solutions have larger ISNRs than the wavelet solutions.
The results are summarized in \cref{tab:Ex1} along with timing.  From this, we can observe that with $\chi^2$, wavelet methods converge faster than framelet methods while with GCV, framelet methods converge faster. In all cases, the $\chi^2$ selection method, which is a root-finding technique, is faster than GCV despite GCV taking fewer iterations to converge.  GCV and $\chi^2$ produce solutions with comparable $\text{RE}$ and $\text{ISNR}$ with either framelets or wavelets.

\begin{figure}
    \centering
    \subfigure[SB Framelets, Opt\label{Ex1R:a}]{
\includegraphics[width=3.5cm]{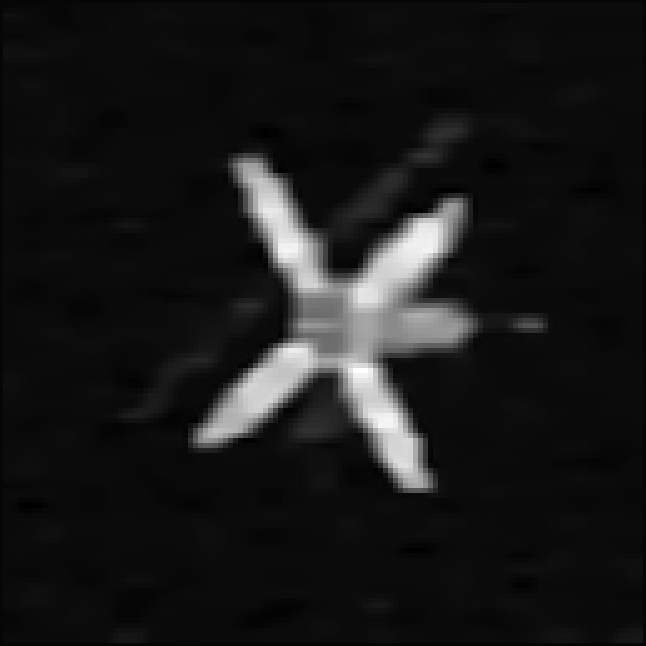}}
    \subfigure[SB Wavelets, Opt\label{Ex1R:b}]{
\includegraphics[width=3.5cm]{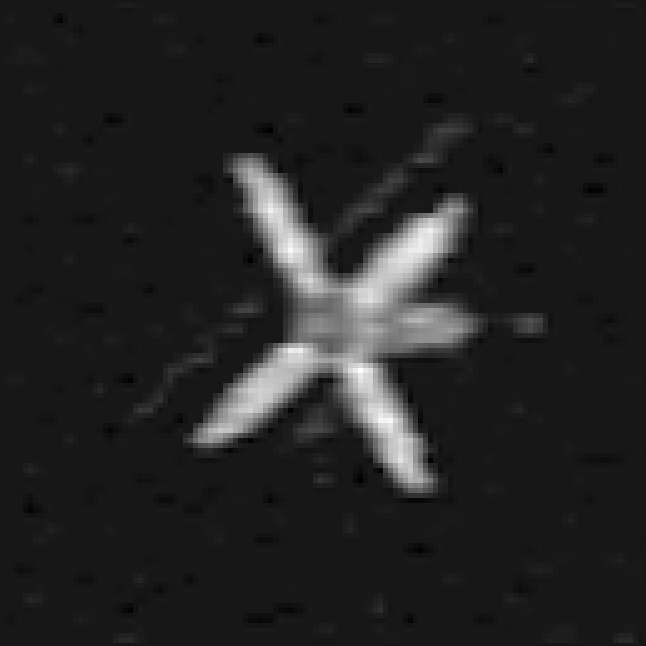}}
    \subfigure[MM Framelets, Opt\label{Ex1R:c}]{
\includegraphics[width=3.5cm]{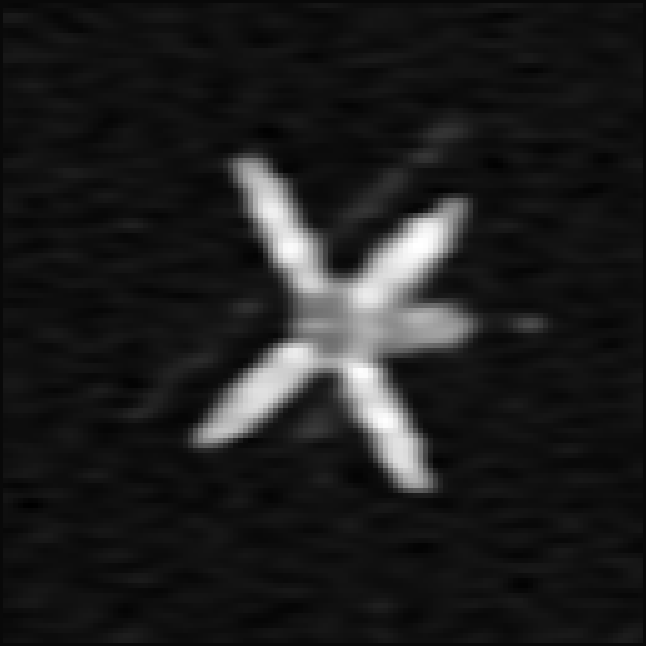}}
    \subfigure[MM Wavelets, Opt\label{Ex1R:d}]{
\includegraphics[width=3.5cm]{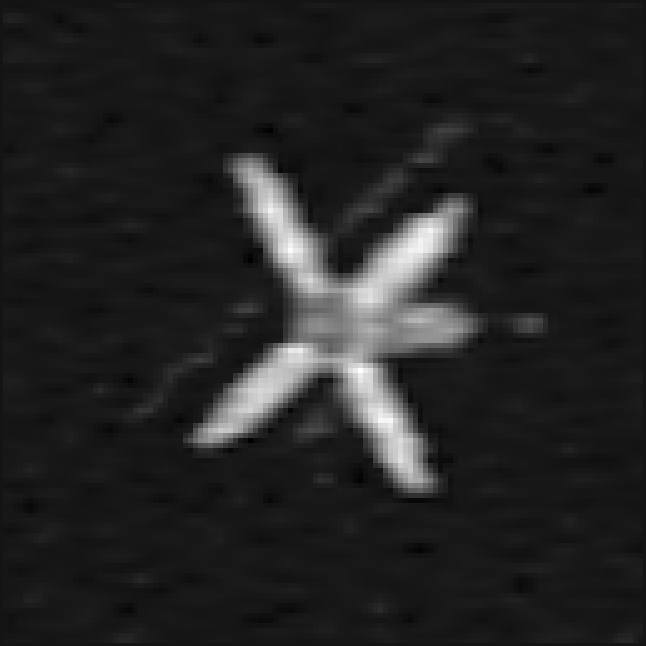}}
\subfigure[SB Framelets, GCV\label{Ex1R:e}]{
\includegraphics[width=3.5cm]{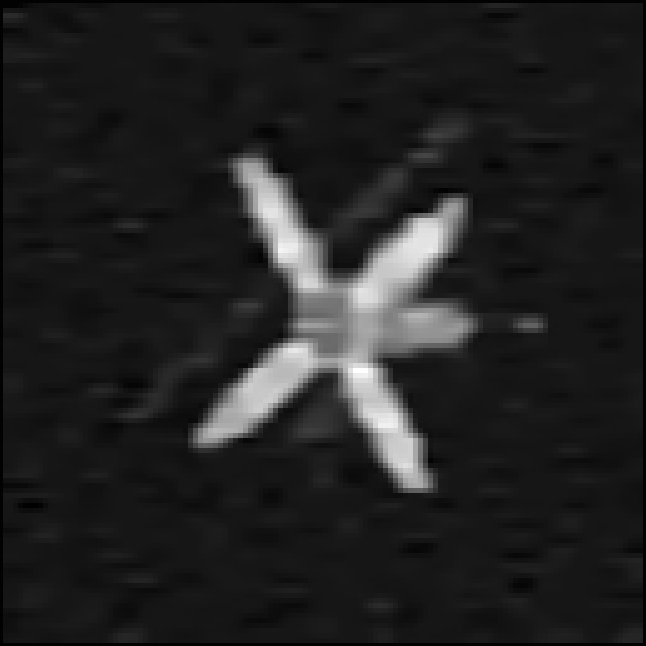}}
    \subfigure[SB Wavelets, GCV \label{Ex1R:f}]{
\includegraphics[width=3.5cm]{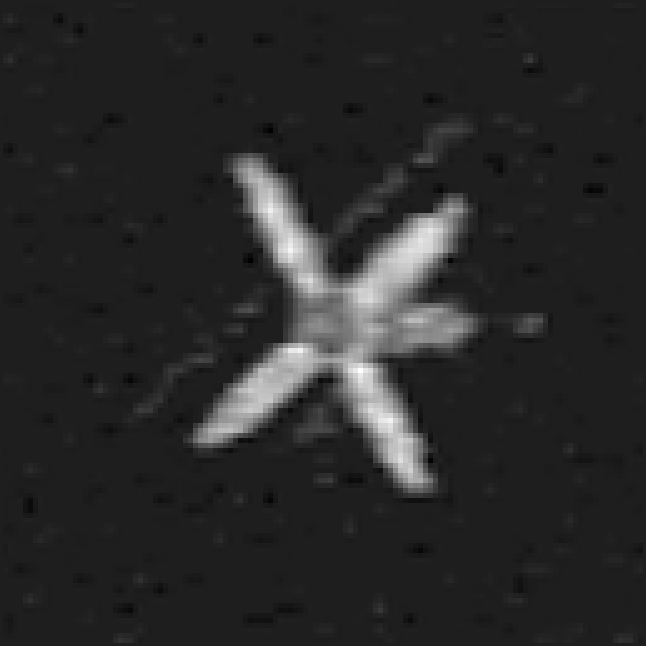}}
\subfigure[MM Framelets, GCV\label{Ex1R:g}]{
\includegraphics[width=3.5cm]{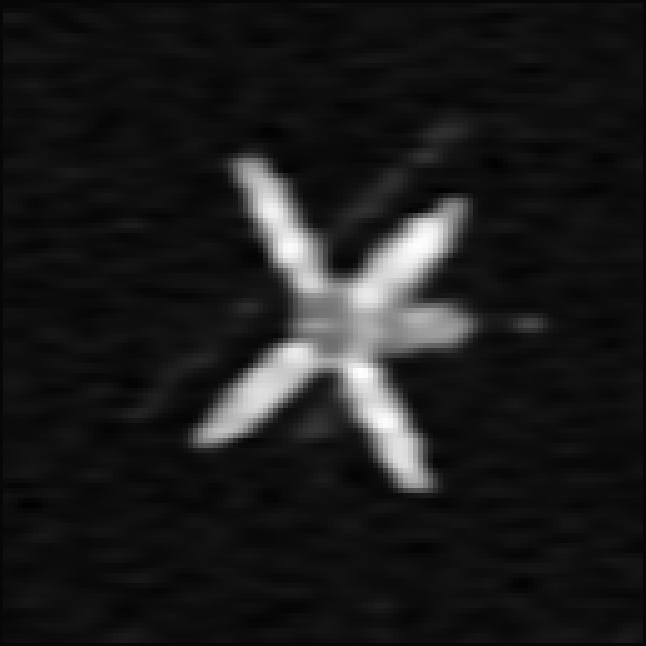}}
    \subfigure[MM Wavelets, GCV \label{Ex1R:h}]{
\includegraphics[width=3.5cm]{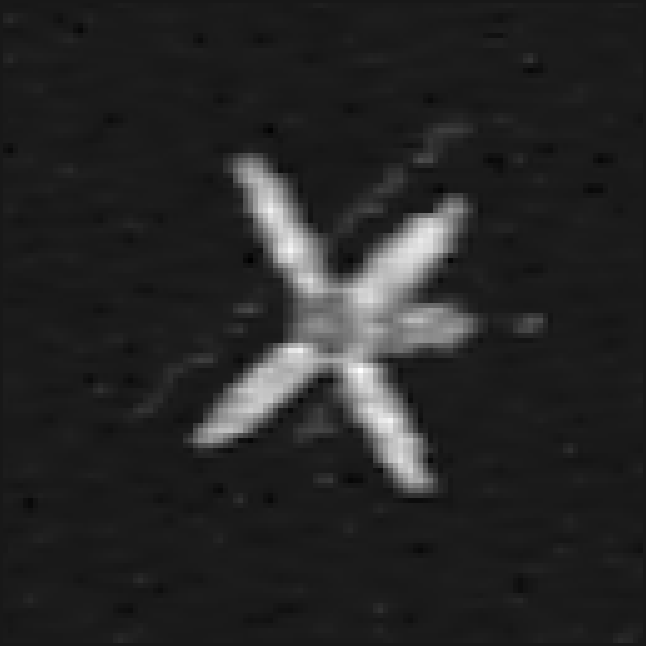}}
   \subfigure[SB Framelets, $\chi^2$ \label{Ex1R:i}]{
\includegraphics[width=3.5cm]{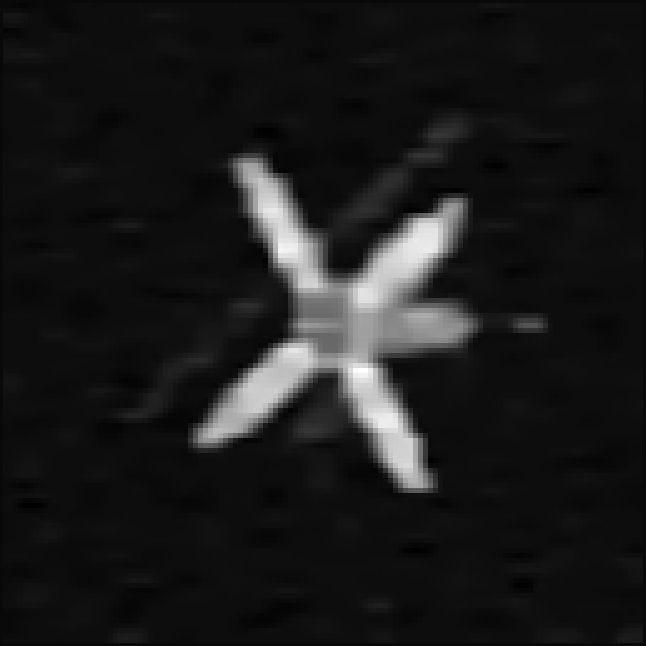}}
    \subfigure[SB Wavelets, $\chi^2$ \label{Ex1R:j}]{
\includegraphics[width=3.5cm]{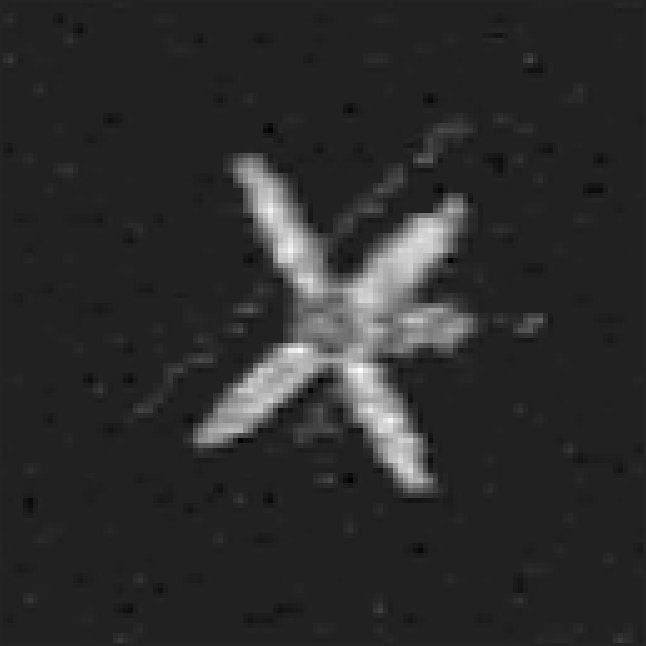}}
\subfigure[MM Framelets, $\chi^2$\label{Ex1R:k}]{
\includegraphics[width=3.5cm]{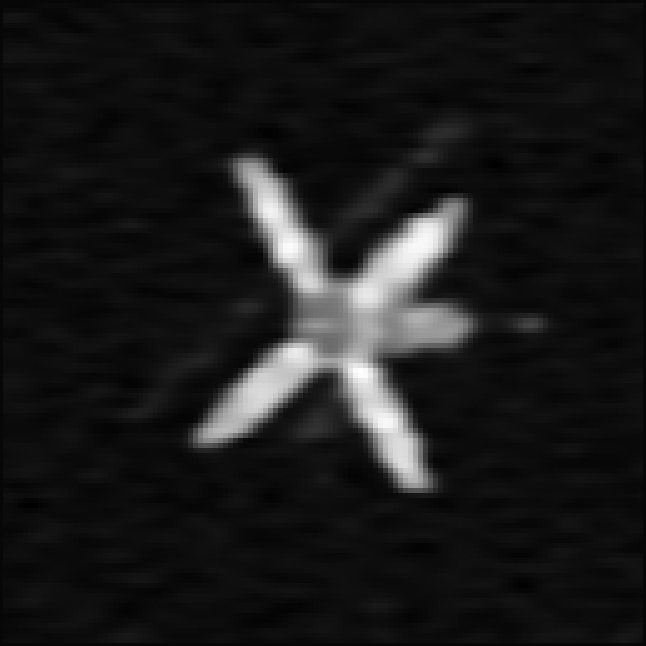}}
    \subfigure[MM Wavelets, $\chi^2$ \label{Ex1R:l}]{
\includegraphics[width=3.5cm]{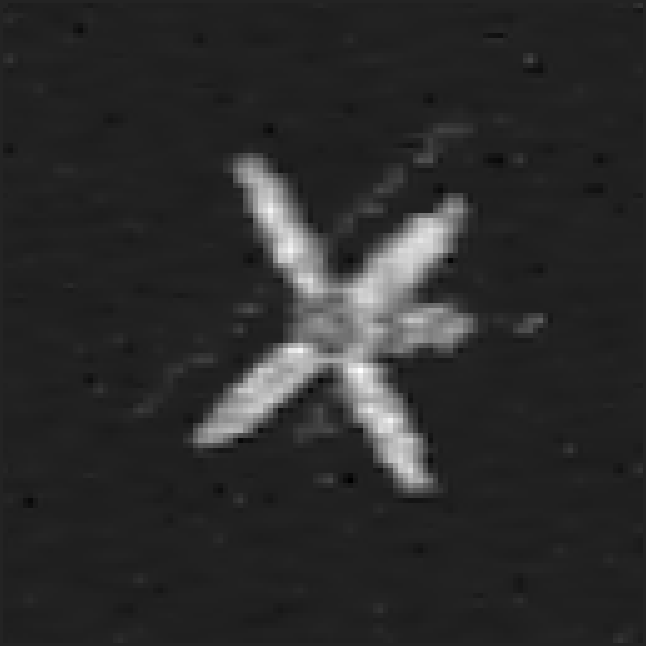}}
    \caption{Solutions to Example 1 in \cref{Ex1}.  The solutions use either SB or MM to solve the problem and either framelets or wavelets for regularization.  The parameter is either fixed optimally or selected with GCV or the non-central $\chi^2$ test.}
    \label{Ex1R}
\end{figure}

\begin{figure} 
    \centering
    \subfigure[$\text{RE}$: SB Framelets \label{Ex1:SBF}]{
\includegraphics[width=3.5cm]{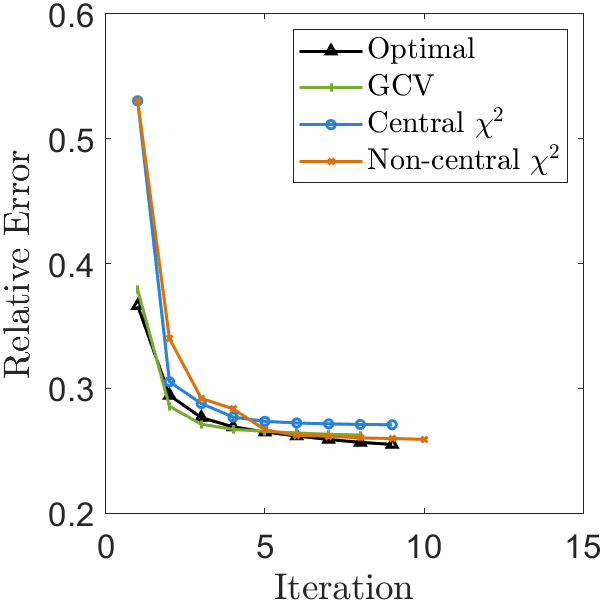}}
    \subfigure[$\text{RE}$: SB Wavelets \label{Ex1:SBW}]{
\includegraphics[width=3.5cm]{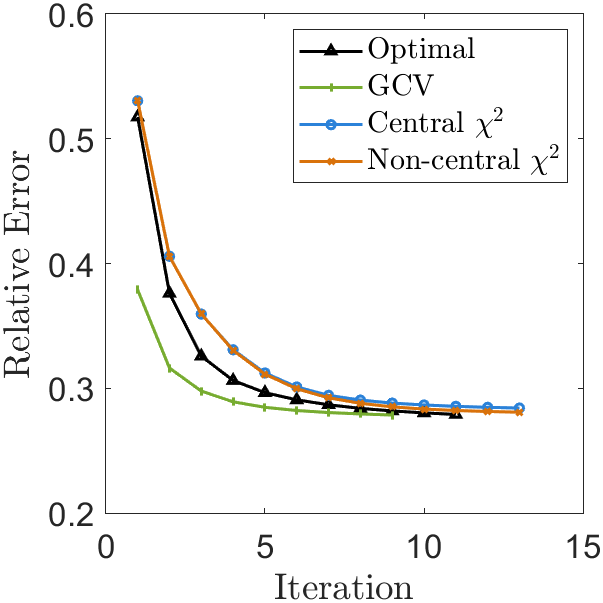}}
   \subfigure[$\text{RE}$: MM Framelets \label{Ex1:MMF}]{
\includegraphics[width=3.5cm]{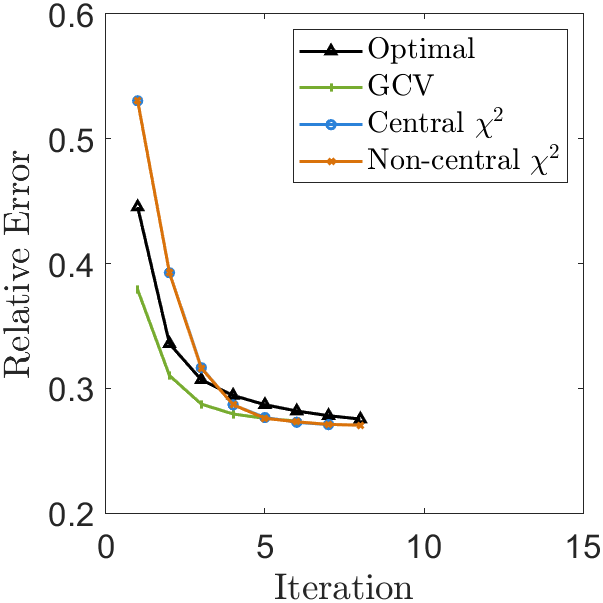}}
    \subfigure[$\text{RE}$: MM Wavelets\label{Ex1:MMW}]{
\includegraphics[width=3.5cm]{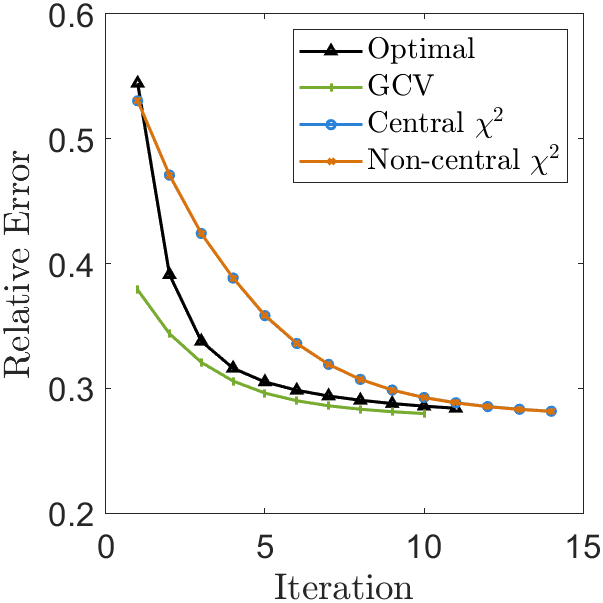}}

    \subfigure[$\text{ISNR}$: SB Framelets \label{Ex1:SBFI}]{
\includegraphics[width=3.5cm]{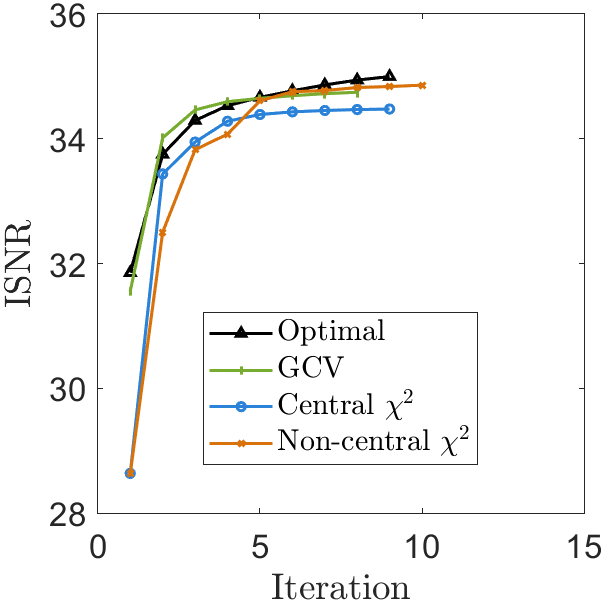}}
    \subfigure[$\text{ISNR}$: SB Wavelets \label{Ex1:SBWI}]{
\includegraphics[width=3.5cm]{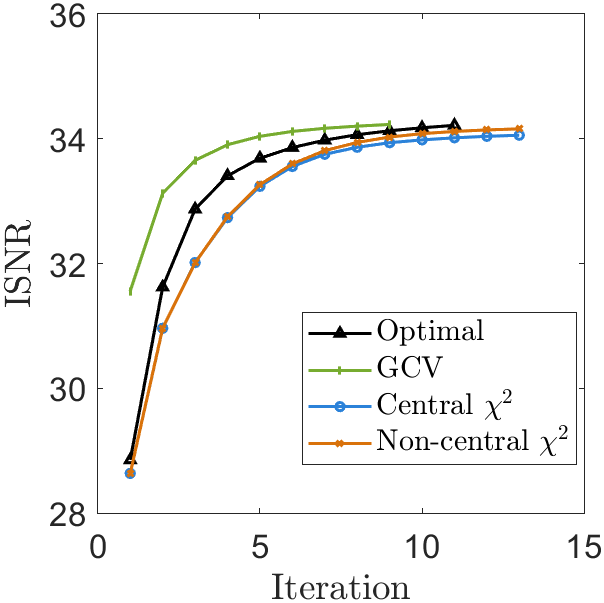}}
   \subfigure[$\text{ISNR}$: MM Framelets \label{Ex1:MMFI}]{
\includegraphics[width=3.5cm]{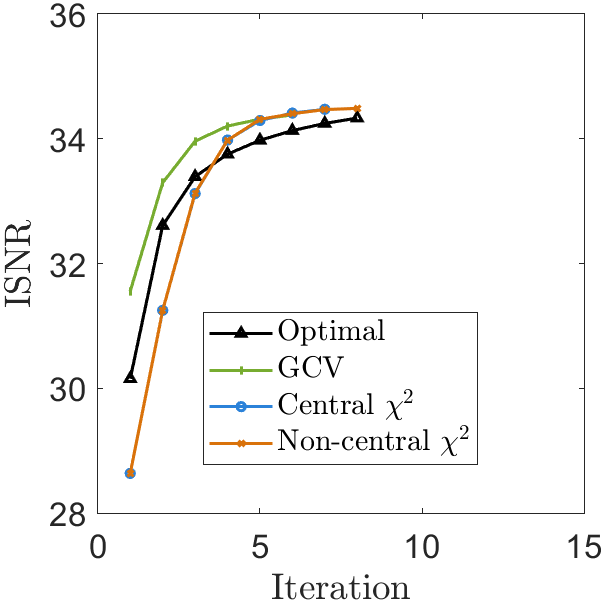}}
    \subfigure[$\text{ISNR}$: MM Wavelets\label{Ex1:MMWI}]{
\includegraphics[width=3.5cm]{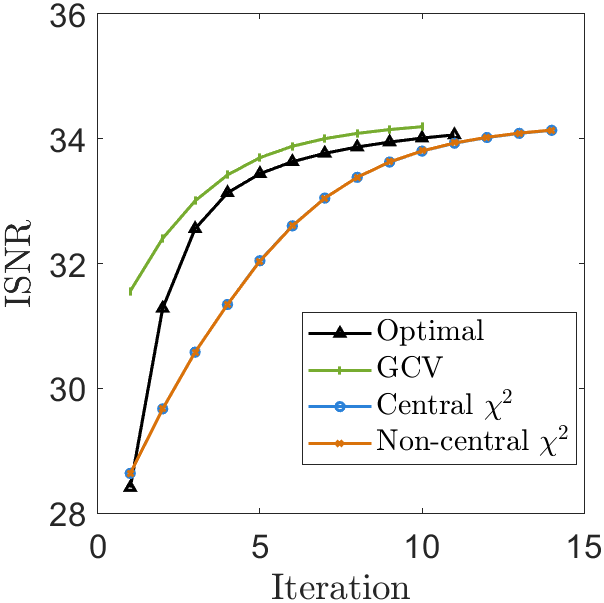}}
\caption{RE and ISNR by iteration for the methods applied to Example 1 in \cref{Ex1}. }
    \label{Ex1Conv}
\end{figure}

\begin{table}[htp]
\centering
\caption{Results for Example 1 in \cref{Ex1}.  Timing includes computing the SVD, running the iterative method to convergence, and selecting $\lambda$ at each iteration.  For the $\chi^2$ test, we report the results for the non-central $\chi^2$ test.
}
\begin{tabular}{lrrrr}
\hline
Method                  & RE   & ISNR & Iterations & Timing (s) \\ \hline
SB Framelets, Optimal  & \textbf{0.26} & \textbf{35.0} & 9         & ---       \\
SB Framelets, GCV      & \textbf{0.26} & 34.7 & 8         & 0.17       \\
SB Framelets, $\chi^2$ & \textbf{0.26} & 34.9 & 10         & 0.08       \\
SB Wavelets, Optimal   & 0.28 & 34.2 & 11         &   ---   \\
SB Wavelets, GCV       & 0.28 & 34.2 & 9          & 0.21       \\
SB Wavelets, $\chi^2$  & 0.28 & 34.2 & 13         & \textbf{0.04}       \\
MM Framelets, Optimal  & 0.28 & 34.3 & 8         & ---       \\
MM Framelets, GCV      & 0.29 & 34.4 & \textbf{6}         & 0.13       \\
MM Framelets, $\chi^2$ & 0.27 & 34.5 & 8         & 0.08       \\ 
MM Wavelets, Optimal   & 0.28 & 34.1 & 11         & ---       \\
MM Wavelets, GCV       & 0.28 & 34.2 & 10         & 0.17       \\
MM Wavelets, $\chi^2$  & 0.28 & 34.1 & 14         & 0.05       \\
\hline
\end{tabular}
\label{tab:Ex1}
\end{table}

\subsubsection{Example 2} As a larger example, we blur the image of the Hubble telescope in \cref{Ex2:x} which has size $n=512$. Here, we use zero boundary conditions, $\tilde{\sigma}_1 = 2$, $\tilde{\sigma}_2 = 8$, and $\text{band}_1 = \text{band}_2 = 50$. Gaussian noise with $\text{BSNR} = 10$ is added to the blurred image to produce $\bfb$. The PSF, $\bfb_{true}$, and $\bfb$ are shown in \cref{Ex2}. We solve this problem using SB with $\tau=0.04$ and MM with $\varepsilon = 0.03$.  The solutions are shown in \cref{Ex2R} with the RE and ISNR in \cref{Ex2Conv}.  When comparing RE and ISNR, SB and MM perform similarly for both framelets and wavelets.  We observe that framelets produce solutions with slightly smaller REs and larger ISNRs than wavelets.  With wavelets, the $\chi^2$ test does not perform as well as GCV, but with framelets, both GCV and $\chi^2$ achieve solutions near the optimal.  The results are summarized in \cref{tab:Ex2} along with timing.  From this, we can see that although the framelet methods produce better solutions, they take longer than wavelet methods to run. We also observe again that the $\chi^2$ selection method is faster than GCV, that again takes fewer iterations.

\begin{figure} 
    \centering
    \subfigure[$\bfx$\label{Ex2:x}]{
\includegraphics[width=3.5cm]{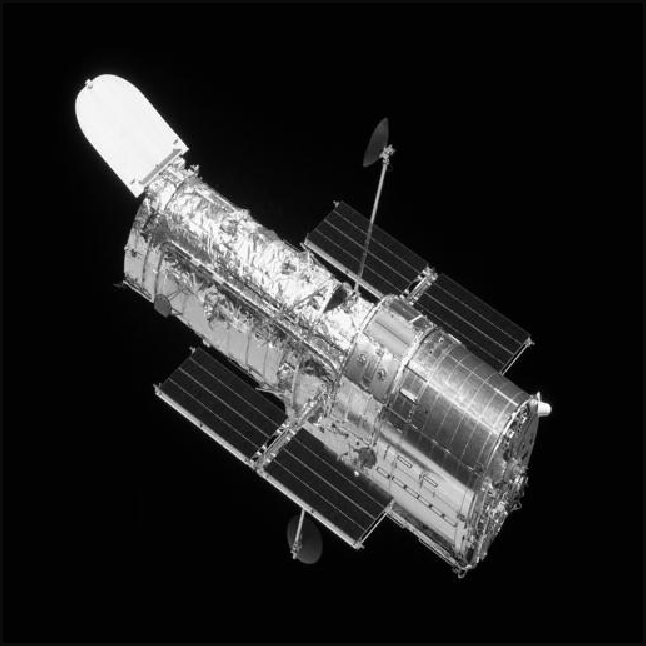}}
    \subfigure[PSF \label{Ex2:PSF}]{
\includegraphics[width=3.5cm]{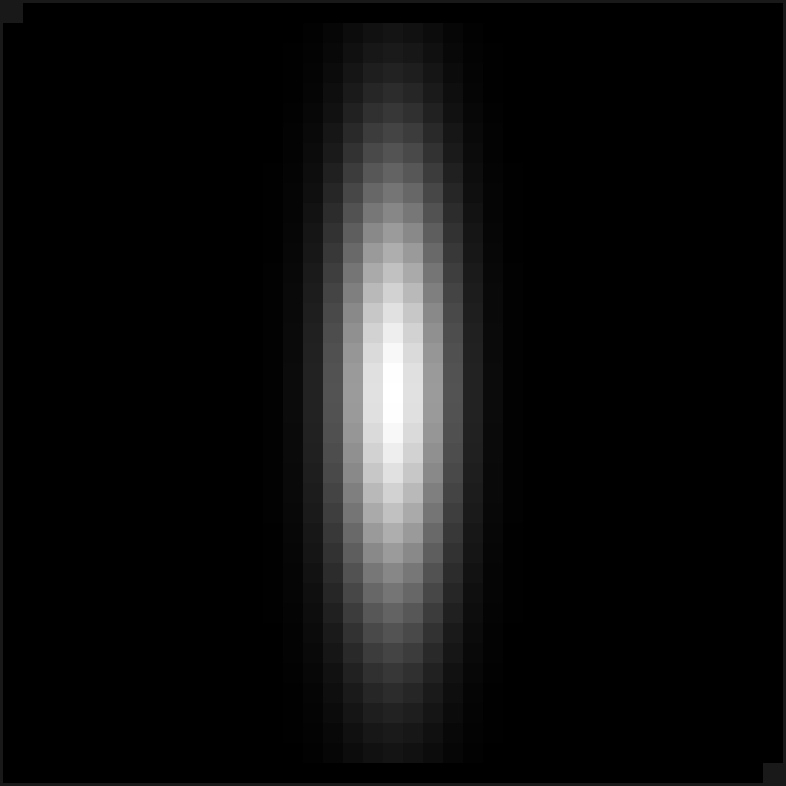}}
\subfigure[$\bfb_{true}$\label{Ex2:btrue}]{
\includegraphics[width=3.5cm]{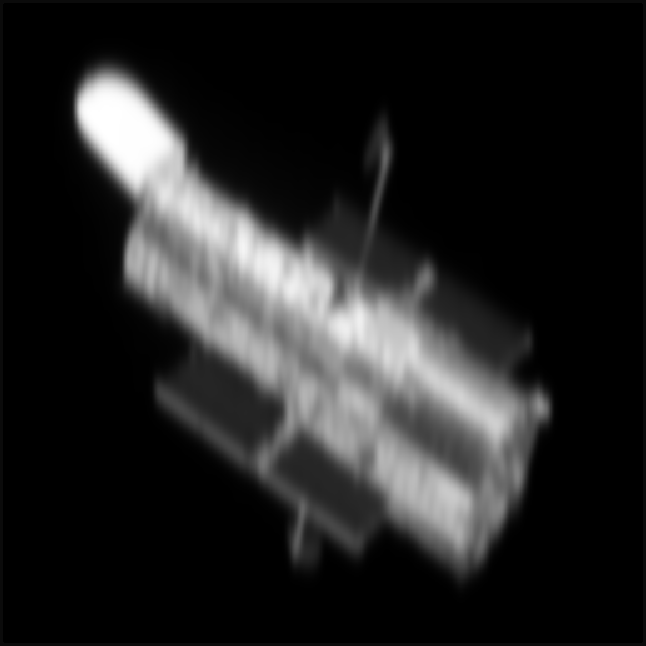}}
    \subfigure[$\bfb$\label{Ex2:b}]{
\includegraphics[width=3.5cm]{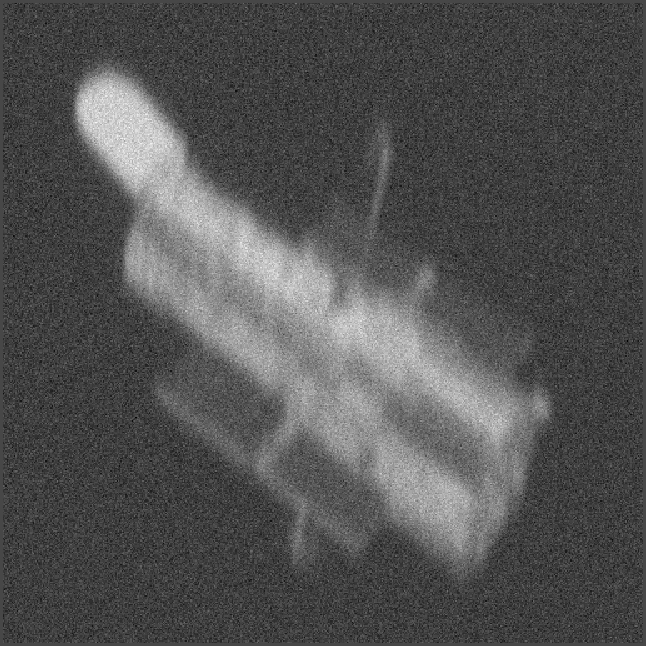}}
\caption{True image $\bfx$ ($512 \times 512$ pixels), PSF ($39 \times 39$ pixels), blurred image $\bfb_{true}$, and the blurred and noisy image $\bfb$ with $\text{BSNR}=10$ for Example 2.}
    \label{Ex2}
\end{figure}

\begin{figure}
    \centering
    \subfigure[SB Framelets, Opt\label{Ex2R:a}]{
\includegraphics[width=3.5cm]{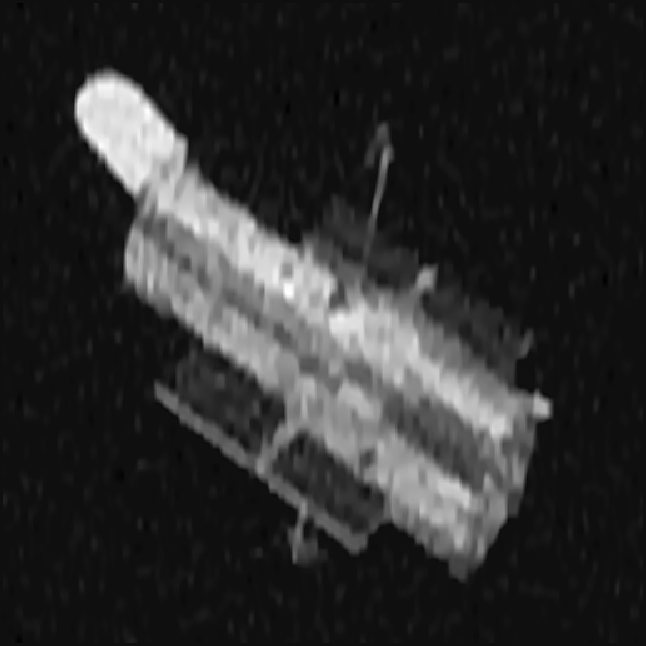}}
    \subfigure[SB Wavelets, Opt\label{Ex2R:b}]{
\includegraphics[width=3.5cm]{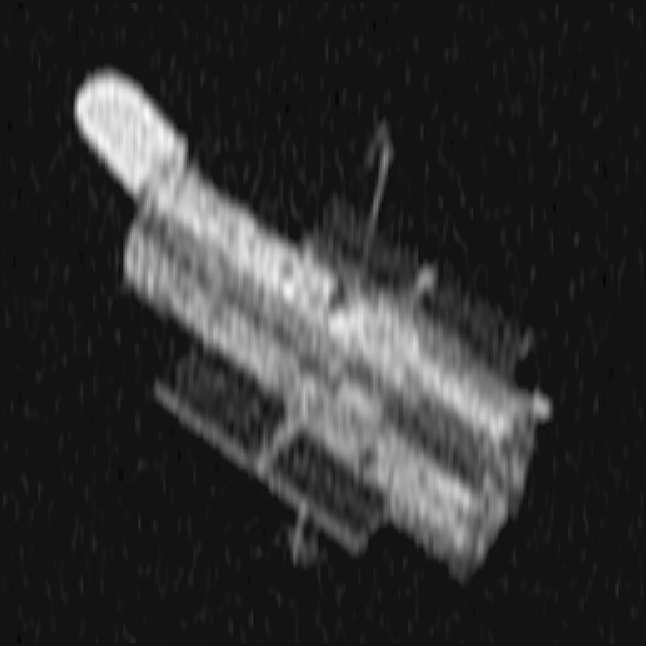}}
    \subfigure[MM Framelets, Opt\label{Ex2R:c}]{
\includegraphics[width=3.5cm]{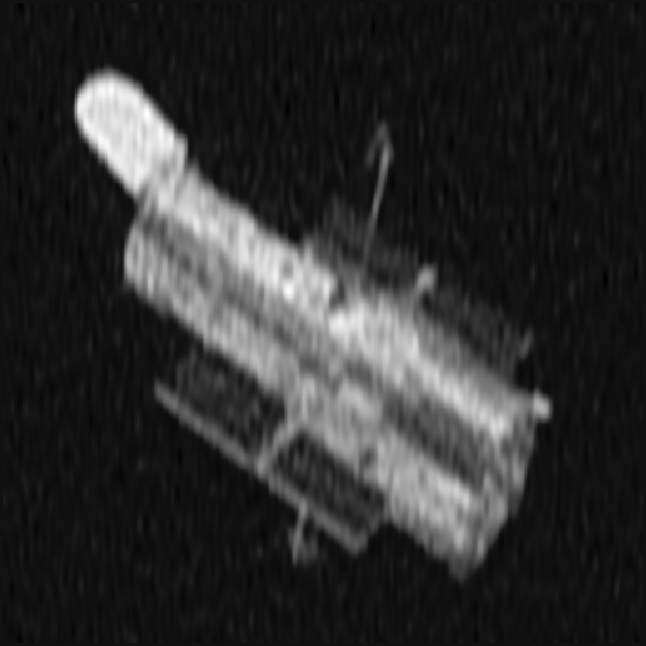}}
    \subfigure[MM Wavelets, Opt\label{Ex2R:d}]{
\includegraphics[width=3.5cm]{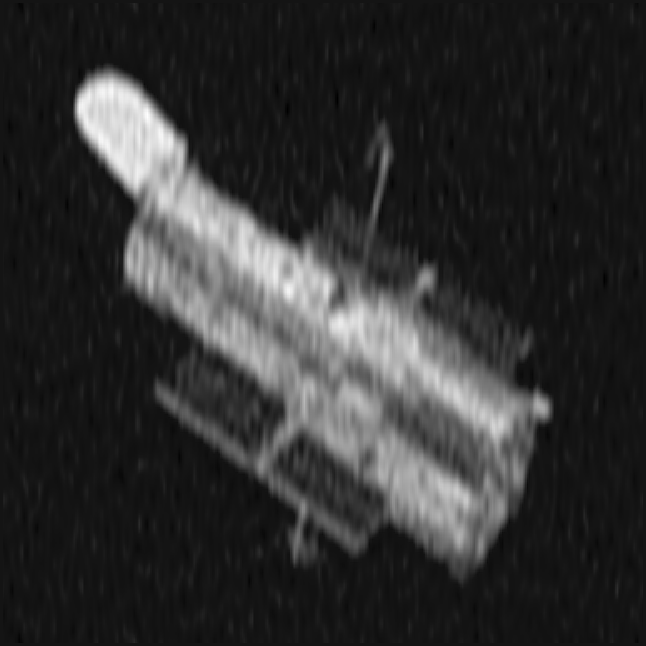}}
\subfigure[SB Framelets, GCV\label{Ex2R:e}]{
\includegraphics[width=3.5cm]{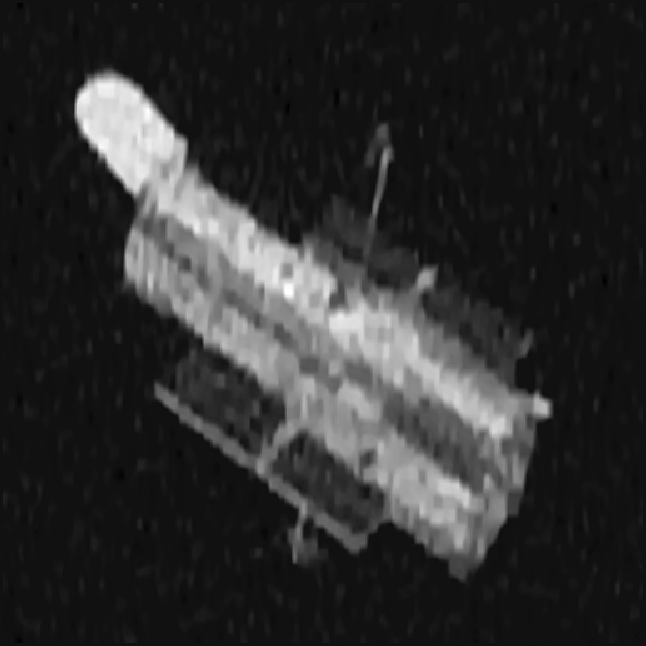}}
    \subfigure[SB Wavelets, GCV \label{Ex2R:f}]{
\includegraphics[width=3.5cm]{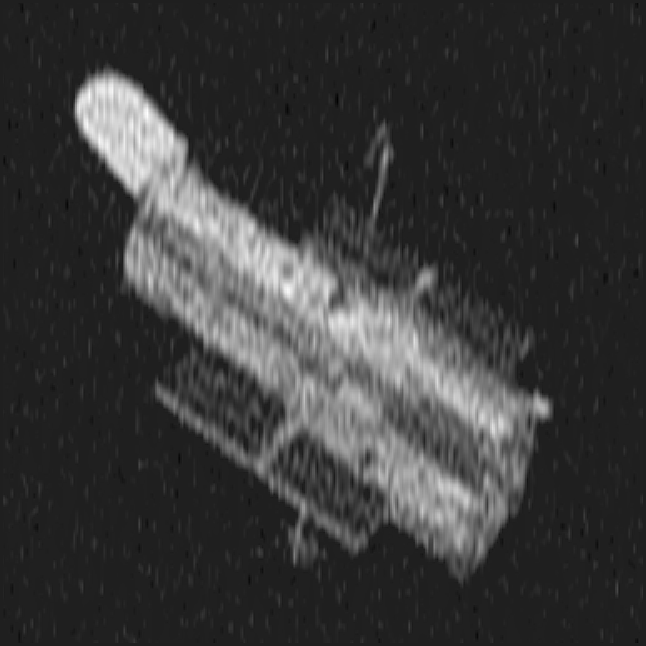}}
\subfigure[MM Framelets, GCV \label{Ex2R:g}]{
\includegraphics[width=3.5cm]{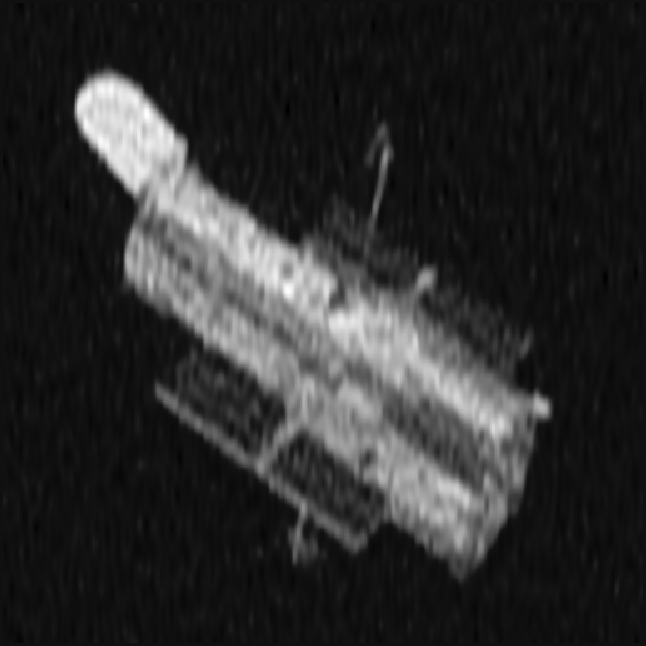}}
    \subfigure[MM Wavelets, GCV \label{Ex2R:h}]{
\includegraphics[width=3.5cm]{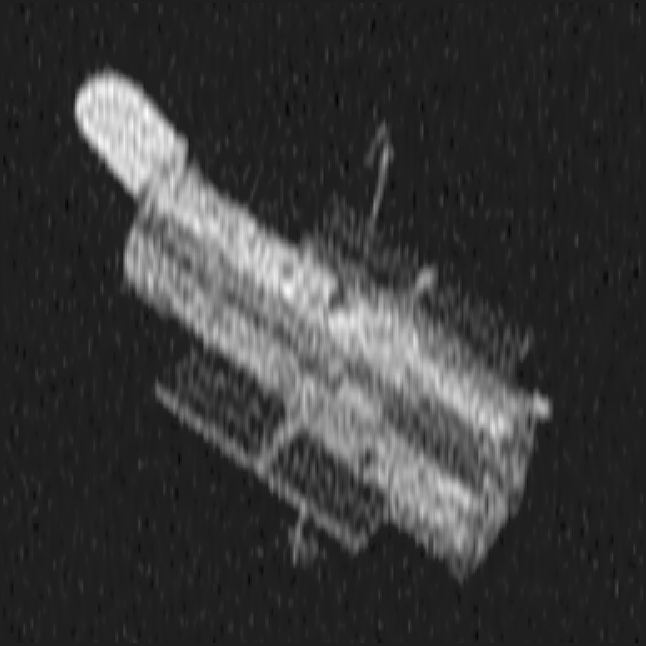}}
\subfigure[SB Framelets, $\chi^2$ \label{Ex2R:i}]{
\includegraphics[width=3.5cm]{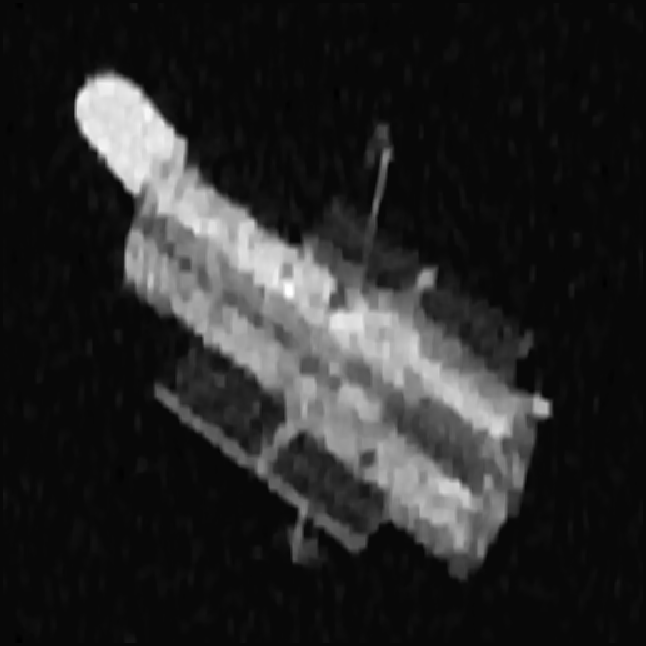}}
    \subfigure[SB Wavelets, $\chi^2$ \label{Ex2R:j}]{
\includegraphics[width=3.5cm]{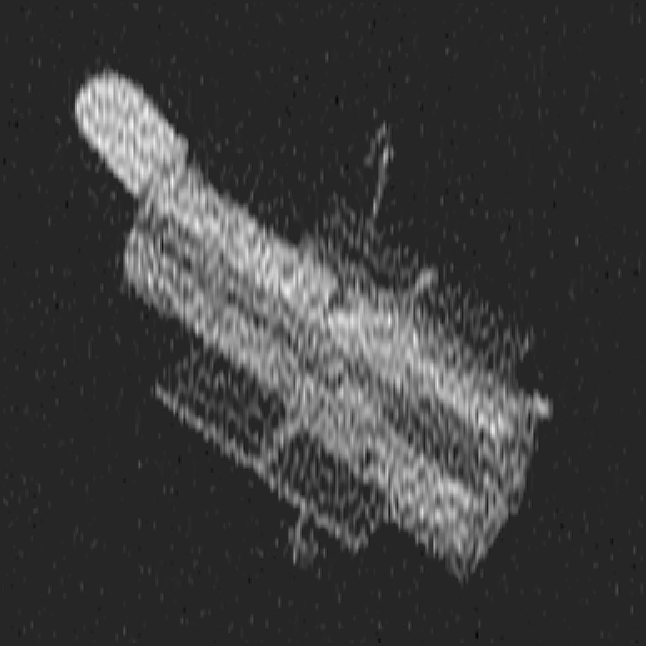}}
\subfigure[MM Framelets, $\chi^2$ \label{Ex2R:k}]{
\includegraphics[width=3.5cm]{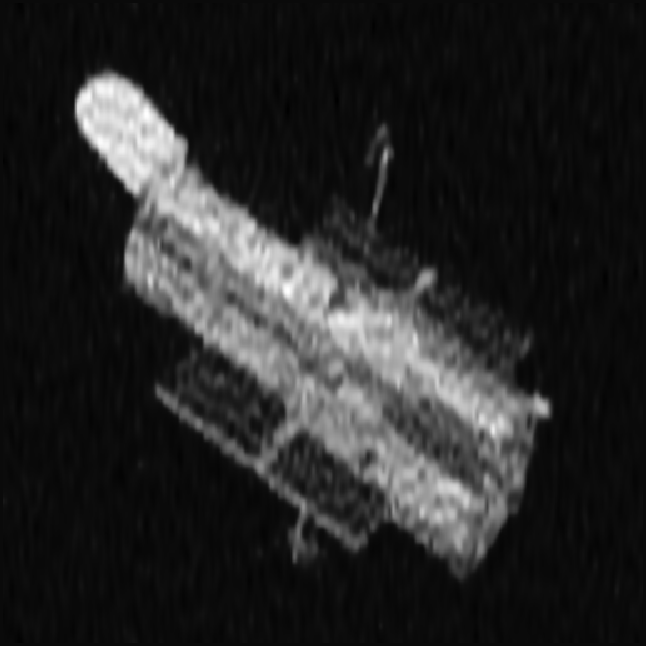}}
    \subfigure[MM Wavelets, $\chi^2$ \label{Ex2R:l}]{
\includegraphics[width=3.5cm]{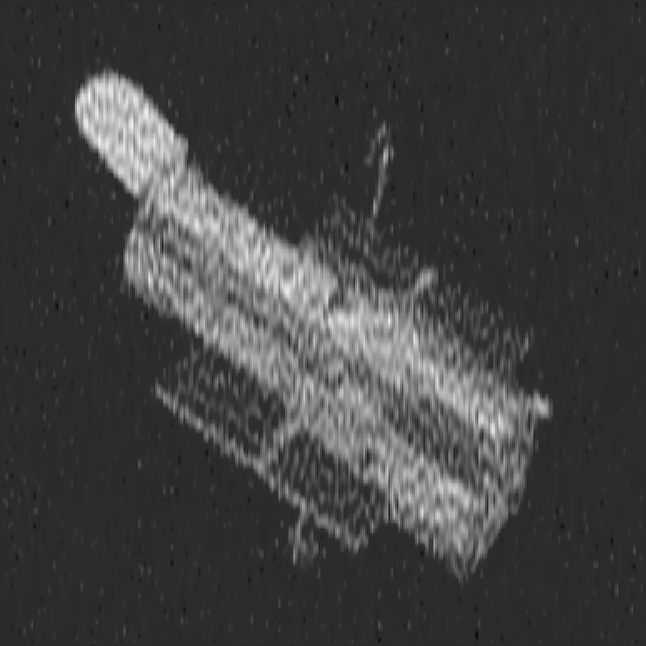}}
    \caption{Solutions to Example 2 in \cref{Ex2}. The solutions use either SB or MM to solve the problem and either framelets or wavelets for regularization.  The parameter is either fixed optimally or selected with GCV or the non-central $\chi^2$ test.}
    \label{Ex2R}
\end{figure}

\begin{figure} 
    \centering
    \subfigure[$\text{RE}$: SB Framelets \label{Ex2:SBF}]{
\includegraphics[width=3.5cm]{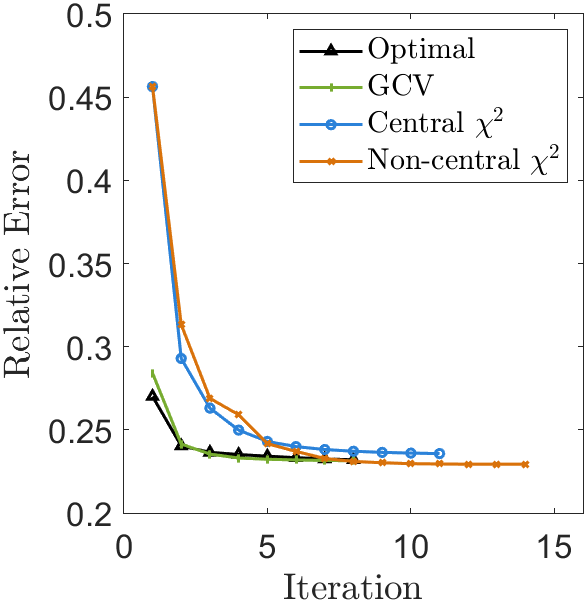}}
    \subfigure[$\text{RE}$: SB Wavelets \label{Ex2:SBW}]{
\includegraphics[width=3.5cm]{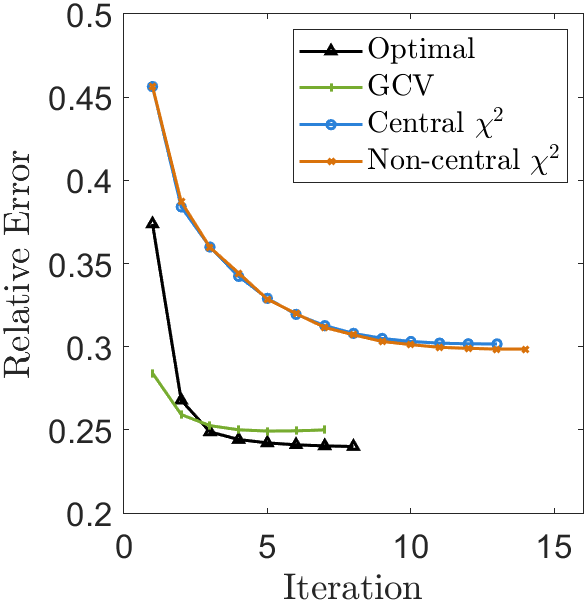}}
   \subfigure[$\text{RE}$: MM Framelets \label{Ex2:MMF}]{
\includegraphics[width=3.5cm]{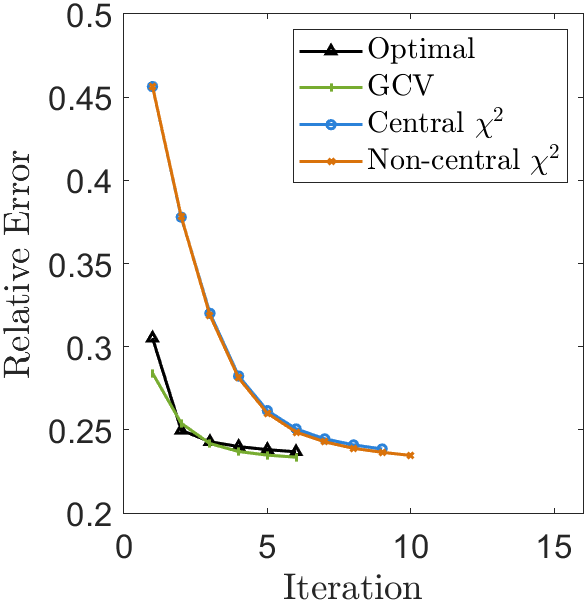}}
    \subfigure[$\text{RE}$: MM Wavelets\label{Ex2:MMW}]{
\includegraphics[width=3.5cm]{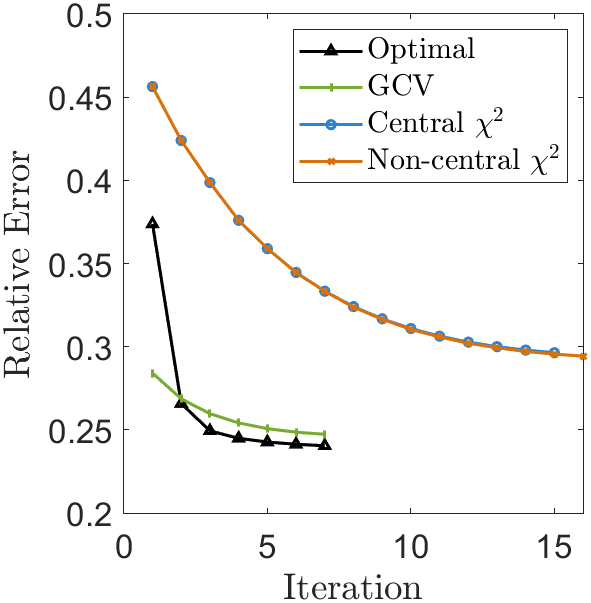}}

    \subfigure[$\text{ISNR}$: SB Framelets \label{Ex2:SBFb}]{
\includegraphics[width=3.5cm]{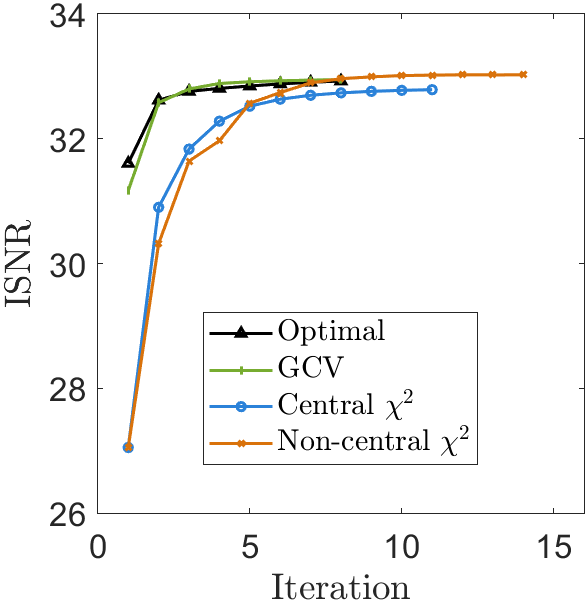}}
    \subfigure[$\text{ISNR}$: SB Wavelets \label{Ex2:SBWb}]{
\includegraphics[width=3.5cm]{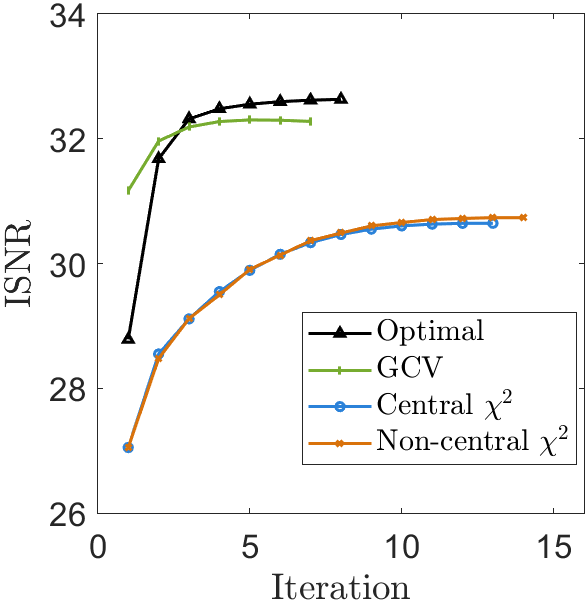}}
   \subfigure[$\text{ISNR}$: MM Framelets \label{Ex2:MMFb}]{
\includegraphics[width=3.5cm]{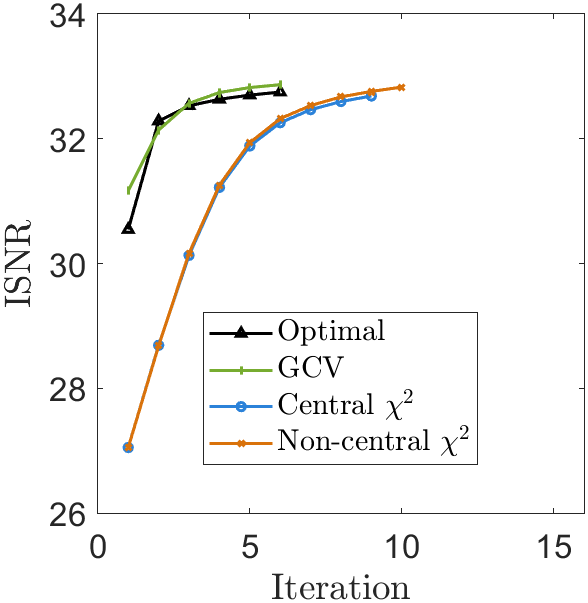}}
    \subfigure[$\text{ISNR}$: MM Wavelets\label{Ex2:MMWb}]{
\includegraphics[width=3.5cm]{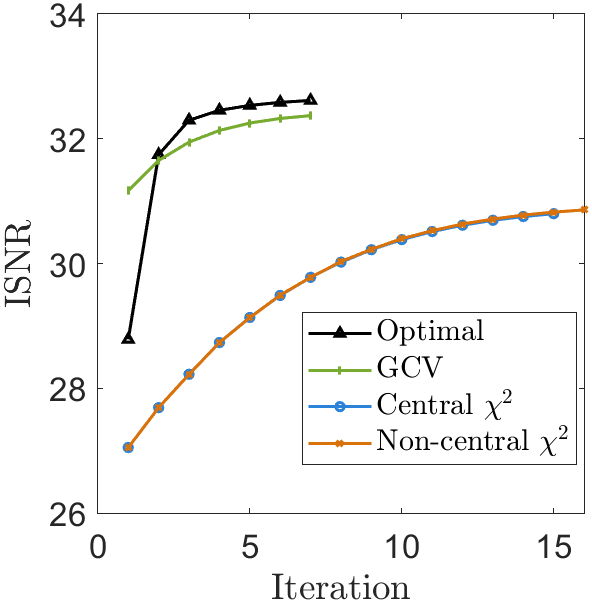}}
\caption{RE and ISNR by iteration for the methods applied to Example 2 in \cref{Ex2}.}
    \label{Ex2Conv}
\end{figure}

\begin{table}[htp]
\centering
\caption{Results for Example 2 in \cref{Ex2}. Timing includes computing the SVD, running the iterative method to convergence, and selecting $\lambda$ at each iteration. For the $\chi^2$ test, we report the results for the non-central $\chi^2$ test.}
\begin{tabular}{lrrrr}
\hline
Method                  & RE   & ISNR & Iterations & Timing (s) \\ \hline
SB Framelets, Optimal  & \textbf{0.23} & 32.9 & 8         & ---       \\
SB Framelets, GCV      & \textbf{0.23} & 32.9 & 8         & 4.76      \\
SB Framelets, $\chi^2$ & \textbf{0.23} & \textbf{33.0} & 14         & 2.75      \\
SB Wavelets, Optimal   & 0.24 & 32.6 & 8          & ---       \\
SB Wavelets, GCV       & 0.25 & 32.3 & 7          & 3.29       \\
SB Wavelets, $\chi^2$  & 0.30 & 30.7 & 14         & \textbf{1.14}       \\
MM Framelets, Optimal  & 0.24 & 32.7 & \textbf{6}         & ---       \\
MM Framelets, GCV      & \textbf{0.23} & 32.9 & \textbf{6}         & 4.22      \\
MM Framelets, $\chi^2$ & \textbf{0.23} & 32.8 & 10         & 2.81      \\ 
MM Wavelets, Optimal   & 0.24 & 32.6 & 7         & ---       \\
MM Wavelets, GCV       & 0.25 & 32.4 & 7          & 3.89       \\
MM Wavelets, $\chi^2$  & 0.29 & 30.9 & 16         & 1.65       \\
\hline
\end{tabular}
\label{tab:Ex2}
\end{table}

\subsubsection{Example 3} Now, we consider the barcode image in \cref{ExB:x}, which has size $n = 128$.  Here, we use periodic boundary conditions for the blurring matrix along with parameters $\tilde{\sigma}_1 = 1.5$, $\tilde{\sigma}_2 = 0.8$, and $\text{band}_1 = \text{band}_2 = 15$. Gaussian noise with $\text{BSNR} = 20$ is added to the blurred image to obtain $\bfb$.  The PSF, $\bfb_{true}$, and $\bfb$ are shown in \cref{ExB}. For this problem, we will compare regularization using column-orthogonal matrices with regularization using the first derivative operator in one direction. For this derivative operator, we use periodic boundary conditions, which gives a matrix of the form $\bfL_D = \bfF_{1D} \otimes \bfI_n$, where $\bfF_{1D} \in \mathbb{R}^{n \times n}$ is given by
\begin{align*}
    \bfF_{1D} = \begin{bmatrix}
        -1 & 1 & 0 &\cdots && 0\\
        0 & -1 & 1 & 0 & \cdots& 0 \\
        \vdots & & \ddots & \ddots && \vdots \\
        0 & \cdots & 0 & -1 & 1 & 0 \\
        0 & \cdots & & 0& -1 & 1 \\
        1 & 0 & \cdots & & 0 & -1
    \end{bmatrix}.
\end{align*}
This matrix only captures the derivative in the horizontal direction, which fits well with this example. We solve this problem using SB and MM where $\tau=0.02$ in SB and $\varepsilon = 0.02$ in MM.

The results for framelets and $\bfL_D$ are shown in \cref{ExBR} with the RE and ISNR plotted in \cref{ExBConv}.  Wavelet results are not shown, but the conclusions for wavelets are the same as in the previous examples. The solutions show that the derivative operator produces horizontal artifacts while the framelet solutions do not. This is reflected in the framelet methods, which have smaller REs and larger ISNRs.  For parameter selection methods, GCV and $\chi^2$ reach similar RE and ISNR for both framelets and $\bfL_D$.  With framelets, $\chi^2$ again takes more iterations but less time to converge than GCV and provides approximately comparable results in terms of RE and ISNR. With $\bfL_D$ though, MM with GCV reached the maximum number of iterations $K_{max}$. The results are summarized in \cref{tab:ExB} along with timing.  The timing shows that the $\chi^2$ selection method is faster than GCV in all cases. We also see that framelet methods take longer to run but achieve better RE and ISNR. This difference in timing comes primarily from the framelet matrix having more rows since the number of iterations for framelets and $\bfL_D$ to converge is generally comparable.
With framelets, computing the SVDs takes about 0.008 seconds compared to 0.013 seconds for the GSVD with $\bfL_D$. Within the iterative methods, however, the last $n$ columns of $\bfV_1$ and $\bfV_2$ are needed, which are of length $p=3n$ with framelets and of length $n$ with $\bfL_D$.  Similarly, $\bfh^{(k)}$ has $9n^2$ entries with framelets compared to $n^2$ with $\bfL_D$.
\begin{figure} 
    \centering
    \subfigure[$\bfx$ \label{ExB:x}]{
\includegraphics[width=3.5cm]{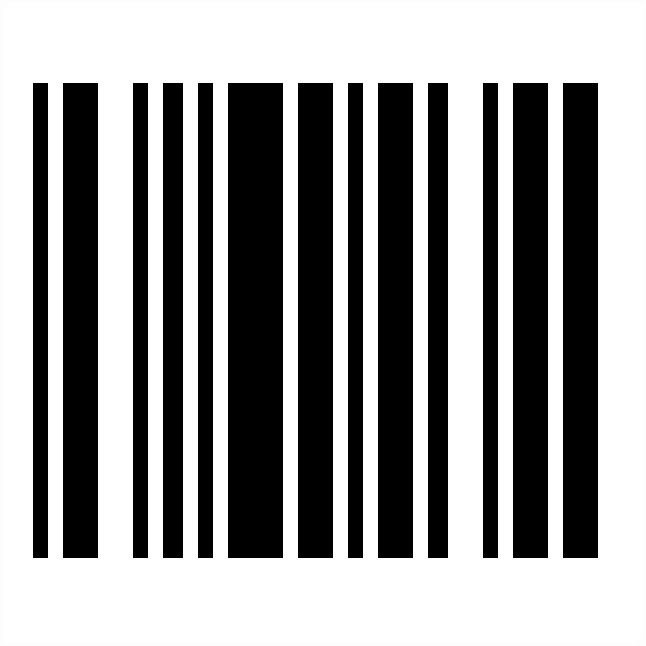}}
    \subfigure[PSF \label{ExB:PSF}]{
\includegraphics[width=3.1cm]{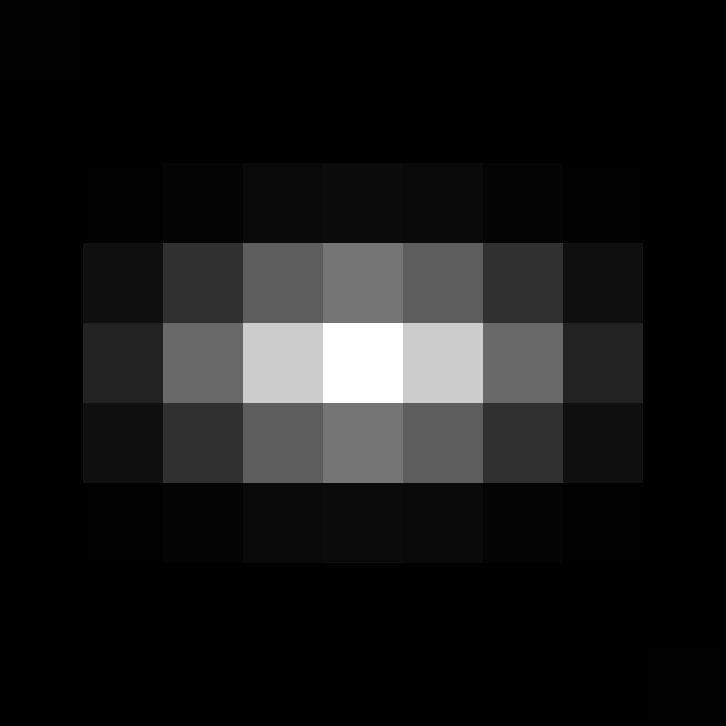}}
    \subfigure[$\bfb_{true}$ \label{ExB:btrue}]{
\includegraphics[width=3.5cm]{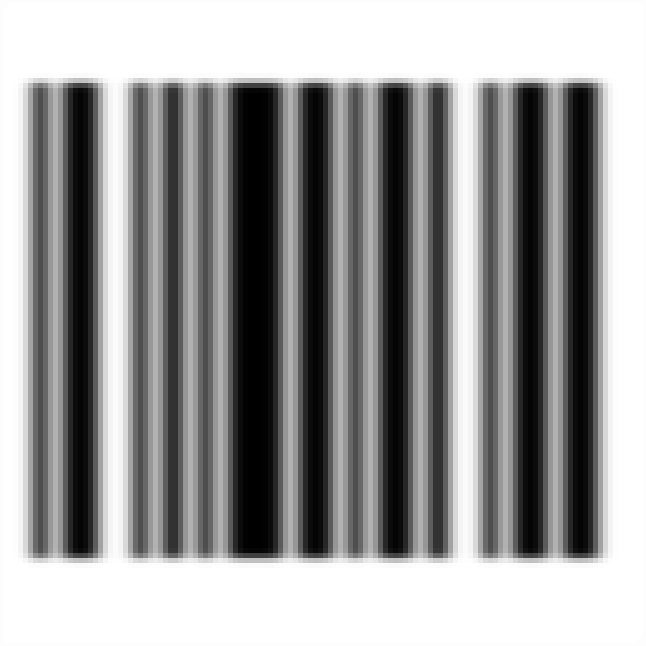}}
    \subfigure[$\bfb$ \label{ExB:b}]{
\includegraphics[width=3.5cm]{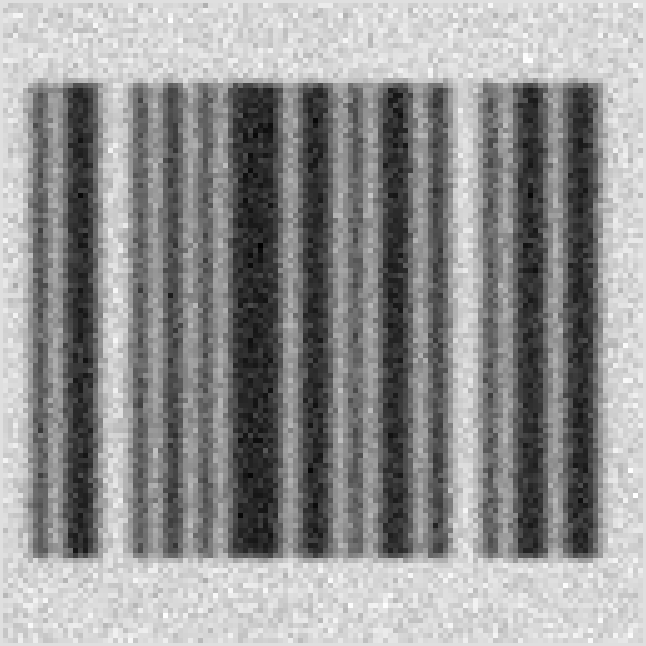}}
\caption{True image $\bfx$ ($128 \times 128$ pixels), PSF ($9 \times 9$ pixels), blurred image $\bfb_{true}$, and the blurred and noisy image $\bfb$ with $\text{BSNR}=20$ for Example 3. }
    \label{ExB}
\end{figure}

\begin{figure}
    \centering
    \subfigure[SB Framelets, Opt\label{ExBR:a}]{
\includegraphics[width=3.5cm]{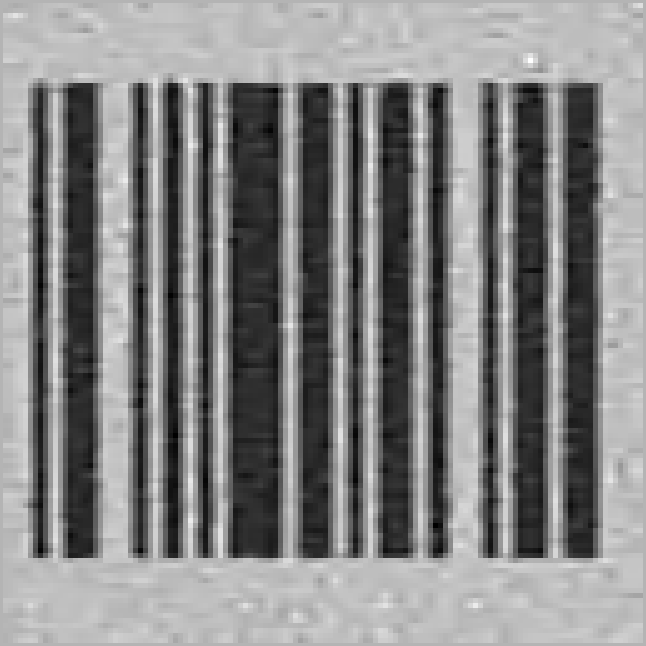}}
\subfigure[SB $\bfL_D$, Opt\label{ExBR:b}]{
\includegraphics[width=3.5cm]{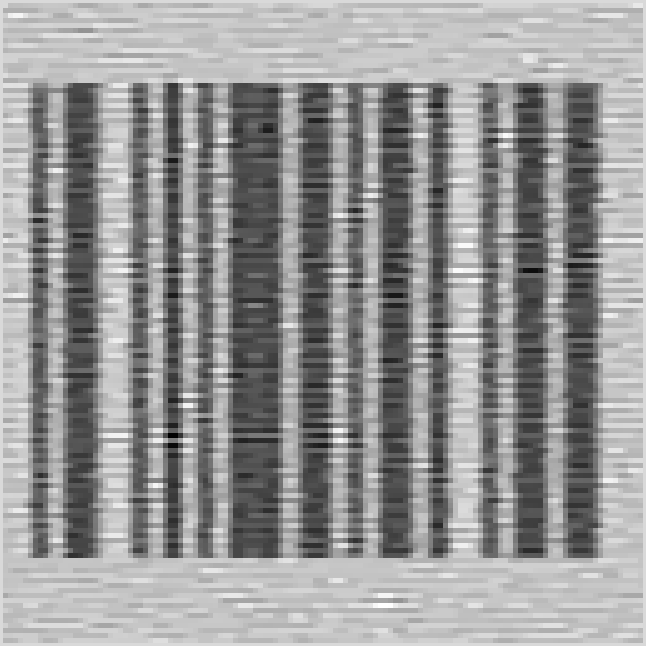}}
    \subfigure[MM Framelets, Opt\label{ExBR:c}]{
\includegraphics[width=3.5cm]{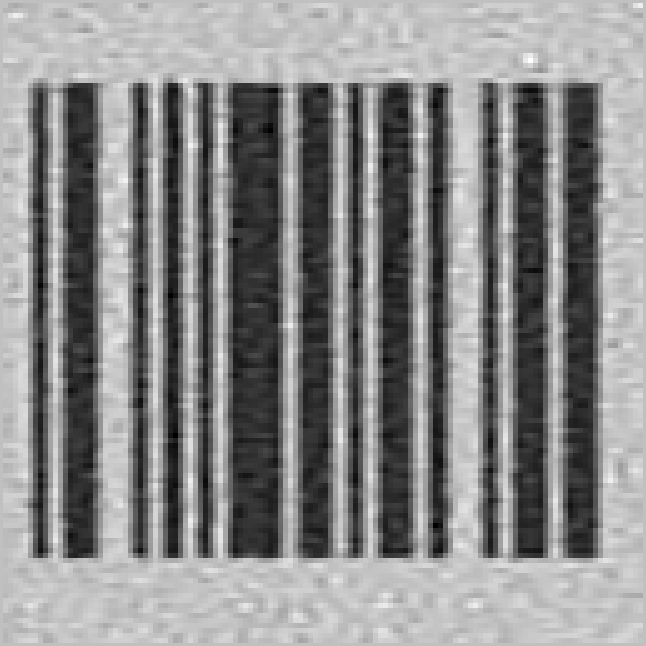}}    
\subfigure[MM $\bfL_D$, Opt\label{ExBR:d}]{
\includegraphics[width=3.5cm]{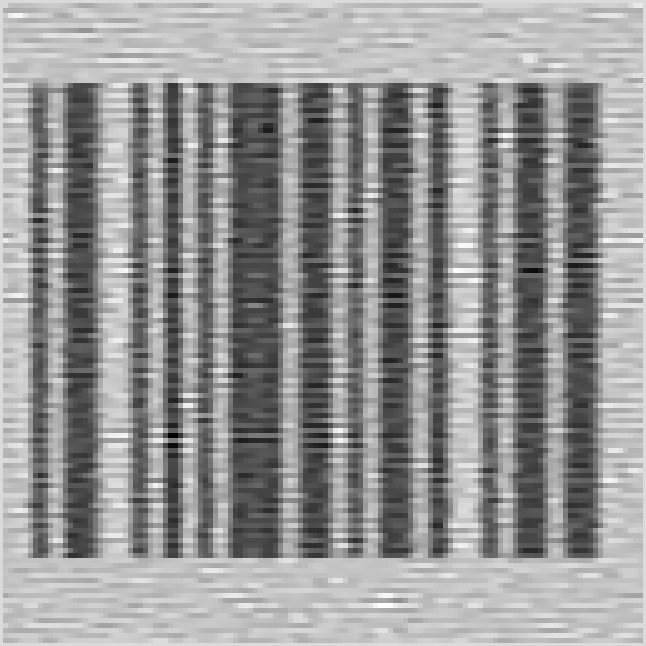}}
\subfigure[SB Framelets, GCV\label{ExBR:e}]{
\includegraphics[width=3.5cm]{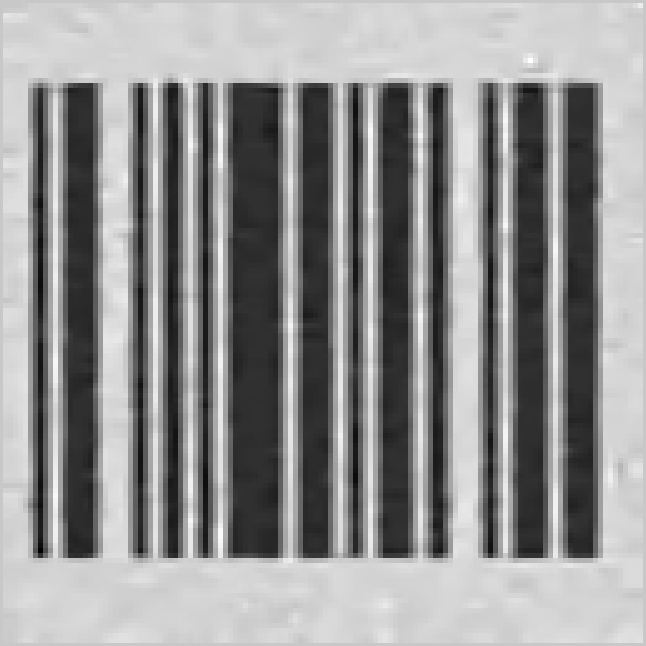}}
    \subfigure[SB $\bfL_D$, GCV \label{ExBR:f}]{
\includegraphics[width=3.5cm]{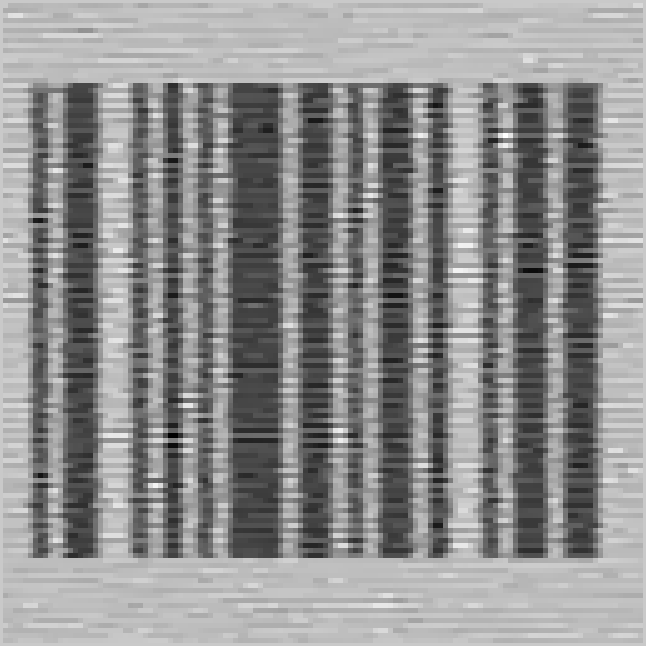}}
\subfigure[MM Framelets, GCV\label{ExBR:g}]{
\includegraphics[width=3.5cm]{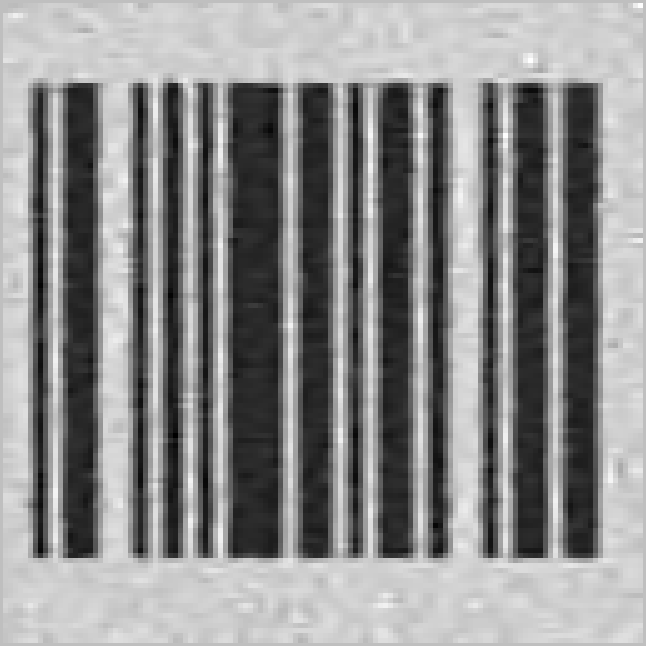}}
    \subfigure[MM $\bfL_D$, GCV \label{ExBR:h}]{
\includegraphics[width=3.5cm]{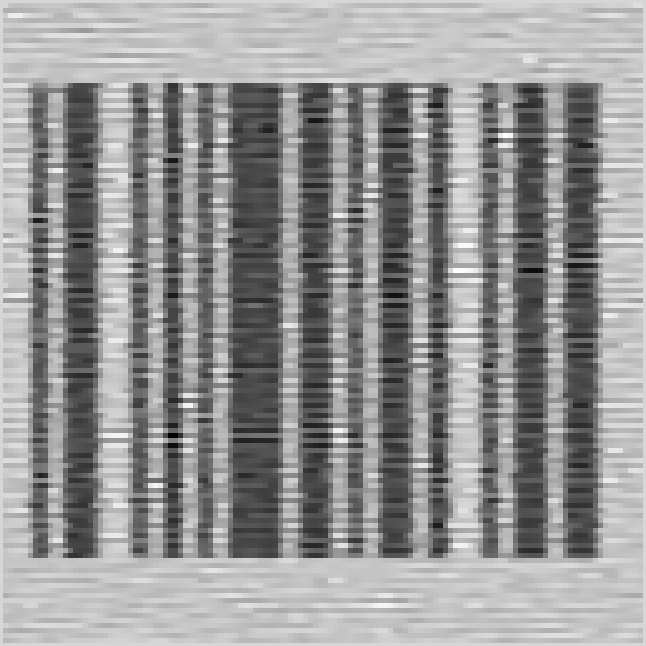}}
   \subfigure[SB Framelets, $\chi^2$ \label{ExBR:i}]{
\includegraphics[width=3.5cm]{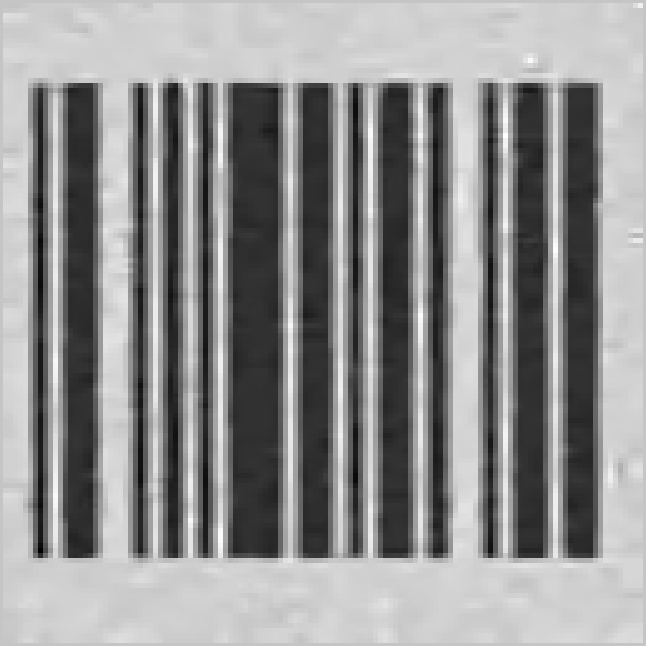}}
    \subfigure[SB $\bfL_D$, $\chi^2$ \label{ExBR:j}]{
\includegraphics[width=3.5cm]{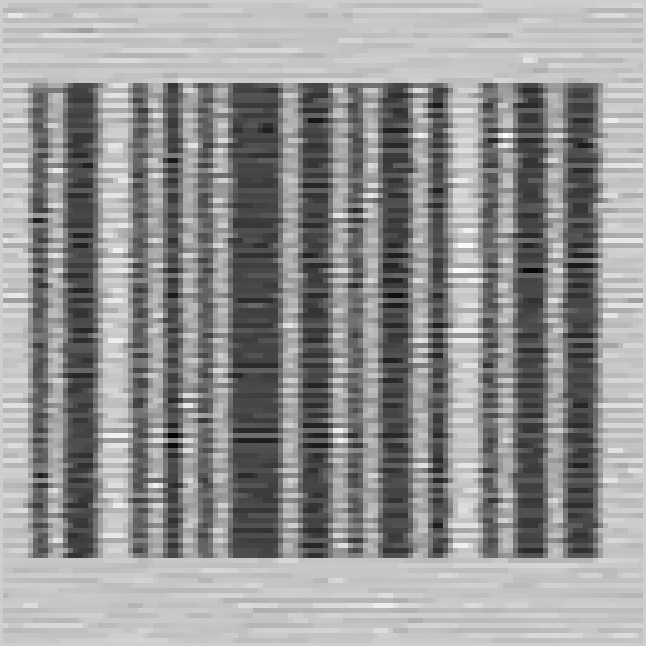}}
\subfigure[MM Framelets, $\chi^2$\label{ExBR:k}]{
\includegraphics[width=3.5cm]{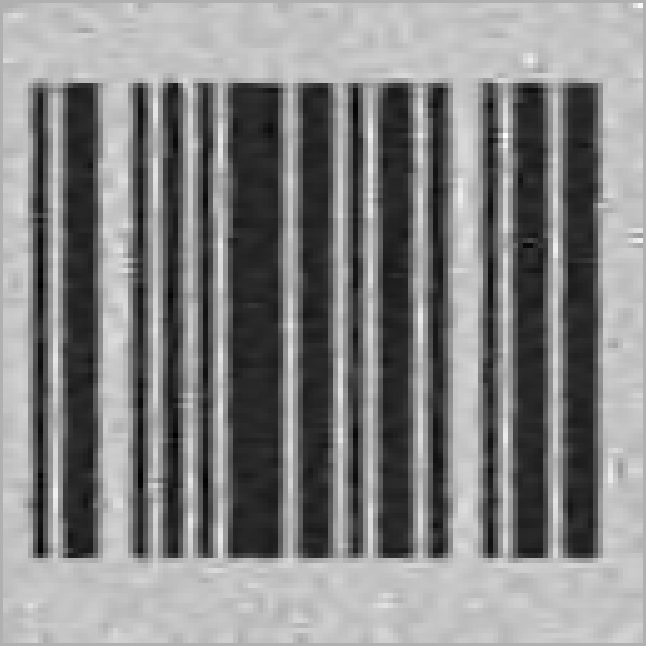}}
    \subfigure[MM $\bfL_D$, $\chi^2$ \label{ExBR:l}]{
\includegraphics[width=3.5cm]{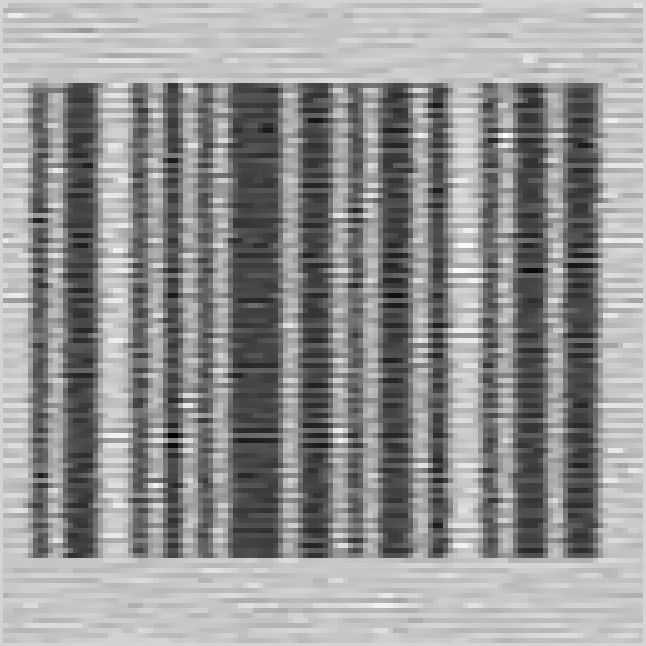}}
    \caption{Solutions to Example 3 in \cref{ExB}.  The solutions use either SB or MM to solve the problem and either framelets or $\bfL_D$ for regularization.  The parameter is either fixed optimally or selected with GCV or the non-central $\chi^2$ test.}
    \label{ExBR}
\end{figure}

\begin{figure} 
    \centering
    \subfigure[$\text{RE}$: SB Framelets \label{ExB:SBF}]{
\includegraphics[width=3.5cm]{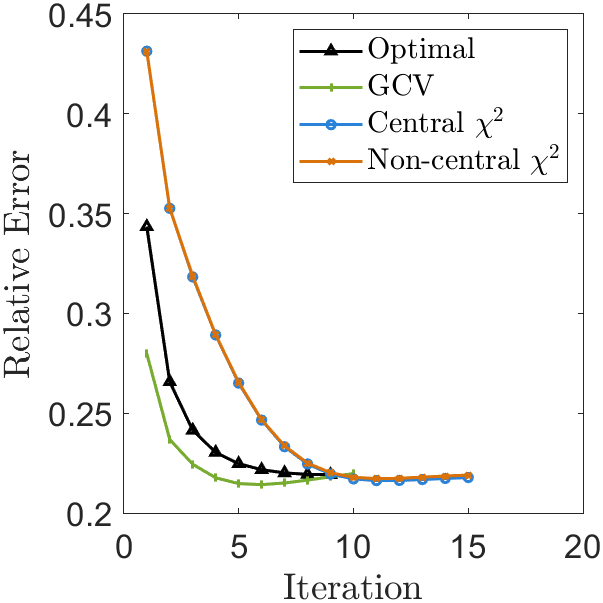}}
    \subfigure[$\text{RE}$: SB $\bfL_D$ \label{ExB:SBW}]{
\includegraphics[width=3.5cm]{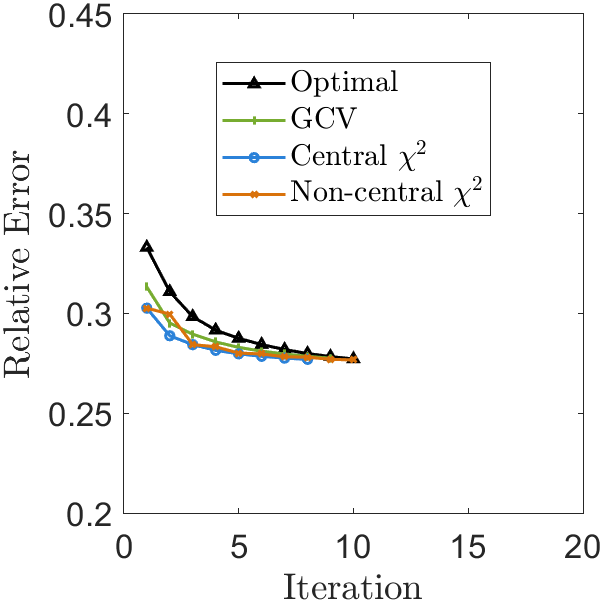}}
   \subfigure[$\text{RE}$: MM Framelets \label{ExB:MMF}]{
\includegraphics[width=3.5cm]{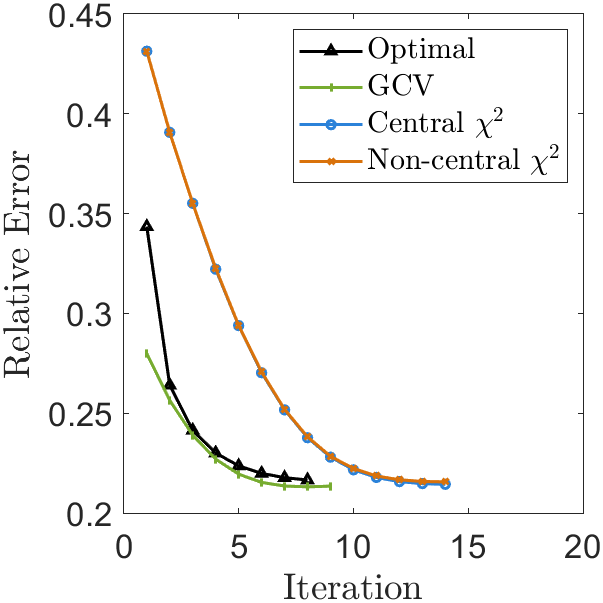}}
    \subfigure[$\text{RE}$: MM $\bfL_D$\label{ExB:MMW}]{
\includegraphics[width=3.5cm]{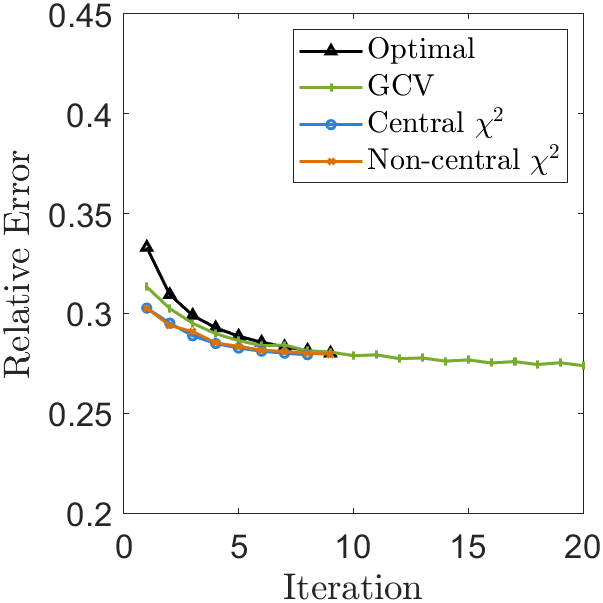}}

    \subfigure[$\text{ISNR}$: SB Framelets \label{ExB:SBFI}]{
\includegraphics[width=3.5cm]{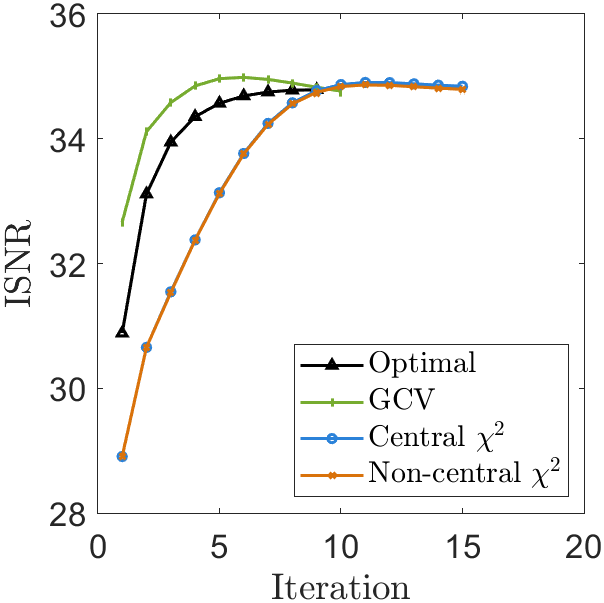}}
    \subfigure[$\text{ISNR}$: SB $\bfL_D$ \label{ExB:SBWI}]{
\includegraphics[width=3.5cm]{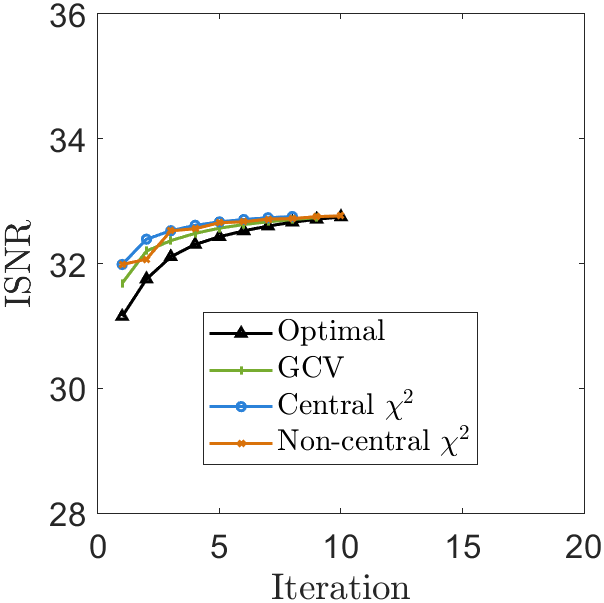}}
   \subfigure[$\text{ISNR}$: MM Framelets \label{ExB:MMFI}]{
\includegraphics[width=3.5cm]{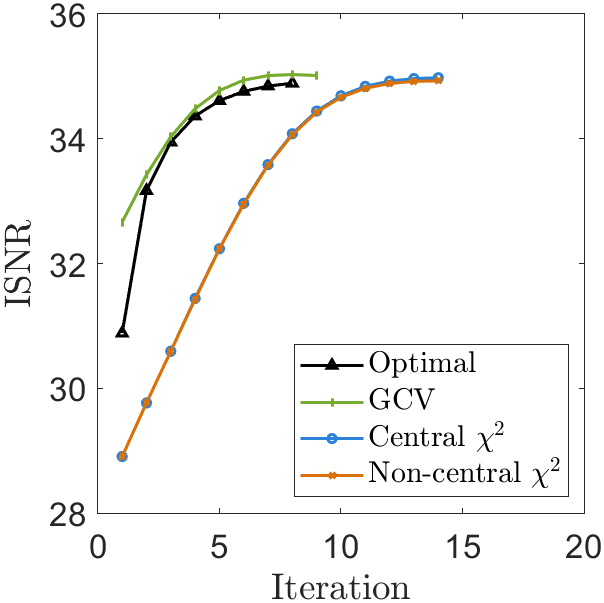}}
    \subfigure[$\text{ISNR}$: MM $\bfL_D$ \label{ExB:MMWI}]{
\includegraphics[width=3.5cm]{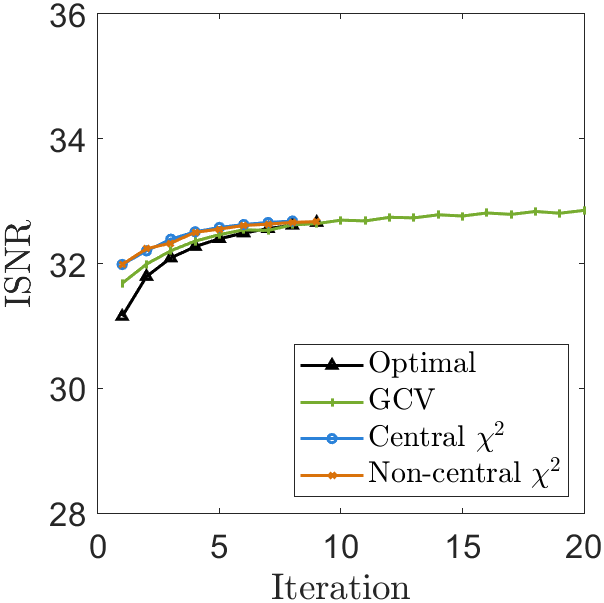}}
\caption{RE and ISNR by iteration for the methods applied to Example 3 in \cref{ExB}.}
    \label{ExBConv}
\end{figure}

\begin{table}[htp]
\centering
\caption{Results for Example 3 in \cref{ExB}.  Timing includes computing the decomposition (SVD for  framelets, GSVD for $\bfL_D$), running the iterative method to convergence, and selecting $\lambda$ at each iteration. For the $\chi^2$ test, we report the results for the non-central $\chi^2$ test.}
\begin{tabular}{lrrrr}
\hline
Method                  & \multicolumn{1}{c}{RE}   & ISNR & Iterations & Timing (s) \\ \hline
SB Framelets, Optimal  & 0.22 & 34.8 & 9         & ---       \\
SB Framelets, GCV      & 0.22 & 34.8 & 10         & 0.22      \\
SB Framelets, $\chi^2$ & 0.22 & 34.8 & 15         &  0.08     \\
SB $\bfL_D$, Optimal  & 0.28 & 32.8 & 10         & ---       \\
SB $\bfL_D$, GCV      & 0.28 & 32.7 & 9         & 0.17      \\
SB $\bfL_D$, $\chi^2$ & 0.28 & 32.8 & 10         & 0.04      \\
MM Framelets, Optimal  & 0.22 & 34.9 & \textbf{8}         & ---       \\
MM Framelets, GCV      & \textbf{0.21} & \textbf{35.0} & 9         &  0.23     \\
MM Framelets, $\chi^2$ & 0.22 & 34.9 & 14         &  0.14     \\ 
MM $\bfL_D$, Optimal  & 0.28 & 32.7 & 9         &  ---      \\
MM $\bfL_D$, GCV      & 0.27 & 32.9 & 20         &  0.40     \\
MM $\bfL_D$, $\chi^2$ & 0.28 & 32.7 & 9         &\textbf{0.03}      \\ \hline
\end{tabular}
\label{tab:ExB}
\end{table}

\section{Conclusions} \label{sec:conc}

In this paper, we presented decomposition-based algorithms for solving an $\ell_1$-regularized problem with KP-structured matrices.  Within these algorithms, we utilized the column orthogonality of a family of regularization operators, which include framelets and wavelets.  These operators allow us to obtain a joint decomposition using SVDs rather than GSVDs, making them more efficient than matrices of comparable size without this property, such as finite-difference operators.  When tested on numerical examples, framelet-based regularization methods are slower than wavelet-based ones but generally produce better solutions in terms of RE and ISNR.  For the parameter selection methods, GCV and $\chi^2$ both perform well with framelets, with $\chi^2$ being faster than GCV. Overall, the results support choosing the SB algorithm with  framelets for regularization and the $\chi^2$ parameter selection method, as  optimal in the context of the solution of \cref{eq:Lxh} and in relation to the other algorithm choices presented here.  We also make the interesting observation that the solution of the Generalized Tikhonov problem with a column-orthogonal regularization matrix is just the solution of the standard Tikhonov problem, independent of the choice of this column-orthogonal matrix. In the future, these decomposition-based algorithms could also be extended for $\ell_p$ regularization.

\section*{Acknowledgments}
Funding: This work was partially supported by the National Science Foundation (NSF) under grant  DMS-2152704 for  Renaut.  Any opinions, findings, conclusions, or recommendations expressed in this material are those of the authors and do not necessarily reflect the views of the National Science Foundation. M.I. Espa\~nol was supported through a Karen Uhlenbeck EDGE Fellowship.

\bibliographystyle{siamplain}

\bibliography{references}

\end{document}